\documentclass{amsart}
\usepackage[mathscr]{eucal}
\usepackage{amsmath,amssymb,hyperref}
\usepackage{amstext}
\usepackage{graphics}
\usepackage{xcolor}
\usepackage{mathrsfs}
\usepackage{epstopdf}

\usepackage{tikz}
\usepackage{tikz-cd}
\usepackage[toc,page]{appendix}
\usetikzlibrary{matrix}

\usepackage[all]{xy}

\usepackage[T1]{fontenc}
\usepackage[utf8]{inputenc}

\newtheorem{thm}{Theorem}[section]
\newtheorem{THM}{Theorem}

\newtheorem{cor}[thm]{Corollary}
\newtheorem{prop}[thm]{Proposition}

\newtheorem{lemma}[thm]{Lemma}

\theoremstyle{definition}



\DeclareMathOperator{\Aut}{Aut}

\DeclareMathOperator{\id}{id}

\def\Diff{\mathrm{Diff}(\mathbb C,0)}
\def\Difffor{\widehat{\mathrm{Diff}}(\mathbb C,0)}

\def\C{\mathbb C}

\def\F{\mathcal F}

\def\X0{X^{\circ}}

\def\Y0{Y^{\circ}}

\input xy
\xyoption{all}

\addtocounter{section}{0}             
\numberwithin{equation}{section}       

\begin{document}

\title[Holonomy representation of quasi-projective leaves]
{Holonomy representation of quasi-projective leaves of codimension one foliations}

\author[B. Claudon]{Beno\^{\i}t CLAUDON}
\address{IECL, Universit\'e de Lorraine, BP 70239, 54506 Vand{\oe}uvre-l\`es-Nancy Cedex, France}
\email{benoit.claudon@univ-lorraine.fr}

\author[F. Loray]{Frank Loray}
\address{Univ Rennes, CNRS, IRMAR - UMR 6625, F-35000 Rennes, France}
\email{frank.loray@univ-rennes1.fr}

\author[J.V. Pereira]{Jorge Vit\'{o}rio PEREIRA}
\address{IMPA, Estrada Dona Castorina, 110, Horto, Rio de Janeiro,
Brasil}
\email{jvp@impa.br}

\author[F. Touzet]{Fr\'ed\'eric Touzet}
\address{Univ Rennes, CNRS, IRMAR - UMR 6625, F-35000 Rennes, France}
\email{frederic.touzet@univ-rennes1.fr}

\subjclass[2010]{37F75; 14F35}
\keywords{Quasi-projective groups; Holonomy representations.}

\thanks{The third author is supported by CNPq and FAPERJ,
and BC, FL and FT benefit from support of CNRS and ANR-16-CE40-0008 project ``Foliage''}

\begin{abstract}
We prove that a representation of the fundamental group of a quasi-projective manifold into the group of formal diffeomorphisms of one variable either
is virtually abelian  or, after taking the quotient by its center, factors through an orbicurve.
\end{abstract}

\maketitle

\setcounter{tocdepth}{1}
\sloppy

\section{Introduction}

\subsection{Statement of the main result}
Let $\Difffor$ be the group of formal biholomorphisms of $(\mathbb C,0)$. The purpose of this article is  to present a proof of the following result.

\begin{THM}\label{THM:A}
    Let $X$ be a quasi-projective manifold and $\rho : \pi_1(X) \to \Difffor$ a representation.
    Suppose $G=\mbox{Im}\ \rho$ is not virtually abelian, then its center $Z(G)$ is necessarily
    a finite subgroup and the induced representation ${\rho}': \pi_1(X) \to G/Z(G)$ factors through an orbicurve.
\end{THM}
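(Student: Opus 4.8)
The plan is to treat separately the two regimes into which a non-virtually-abelian $G=\mathrm{Im}\,\rho\subseteq\Difffor$ may fall, according to whether $G$ is solvable or not, after first disposing of the center. Throughout I write $\lambda:\Difffor\to\C^*$ for the homomorphism recording the linear part $f\mapsto f'(0)$; its kernel is the group of diffeomorphisms tangent to the identity, which is torsion-free since a nontrivial $f=z+a_pz^p+\cdots$ satisfies $f^{\circ n}=z+n\,a_pz^p+\cdots\ne\id$.

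First I would prove that $Z(G)$ is finite. Let $c\in Z(G)$, $c\ne\id$. If $c$ has infinite order, then either $\lambda(c)$ is not a root of unity, in which case $c$ is formally linearizable and its centralizer in $\Difffor$ is the abelian group $\{z\mapsto\mu z\}\cong\C^*$; or $\lambda(c)$ has finite order $q$ and $c^{\circ q}\ne\id$ is tangent to the identity, whose centralizer is abelian (being contained in the flow of its formal infinitesimal generator). In either case $G$, lying in the centralizer of a central element, would be abelian, a contradiction; hence $Z(G)$ is a torsion group, and since $\ker\lambda$ is torsion-free, $\lambda$ embeds $Z(G)$ into the roots of unity. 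If $Z(G)$ were infinite it would contain elements $c$ of arbitrarily large order $m$; fixing such a $c$ and linearizing it to $z\mapsto\zeta_mz$, every element of $G$ lies in the centralizer $\{z\,u(z^m)\}$, so any commutator $[g,h]$ is tangent to the identity, lies in $\{z\,u(z^m)\}$, and therefore differs from the identity only in orders $\ge m+1$. As $m$ is unbounded this forces $[g,h]=\id$, making $G$ abelian, again a contradiction. Thus $Z(G)$ is finite and $\rho'$ has image $\bar G=G/Z(G)$.

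If $G$ is solvable (hence virtually metabelian but not virtually abelian), the structure theory of solvable subgroups of $\Difffor$ places $\bar G$, up to finite index and conjugacy, inside one of the affine models $E_k=\{z\mapsto\alpha z(1+\beta z^k)^{-1/k}:\alpha\in\C^*,\ \beta\in\C\}\cong\Aff(\C)=\C^*\ltimes\C$; in particular $\rho'$ may be viewed as a non-virtually-abelian representation into $\Aff(\C)$. I would then extract the linear part $\chi:\pi_1(X)\to\C^*$ and the translation cocycle $[u]\in H^1(\pi_1(X),\C_\chi)$. Non-virtual-abelianness forces $\chi\ne1$ and $[u]\ne0$, so $\chi$ lies in the first characteristic variety of $X$; moreover $\chi$ cannot be torsion, for otherwise a finite étale cover would trivialize $\chi$ and reduce $\rho'$ to a representation by translations, i.e. an abelian one, contradicting that $G$ is not virtually abelian. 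A non-torsion character supporting cohomology lies on a positive-dimensional component of the characteristic variety, and by Arapura's structure theorem such a component is pulled back from an orbifold fibration $X\to C$ over an orbicurve; this fibration yields the desired factorization of $\rho'$ through $\pi_1^{\mathrm{orb}}(C)$.

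The remaining, and \emph{hardest}, case is when $G$ is non-solvable. Here the finite-dimensional jet quotients no longer capture $G$, and the main tool is rigidity: non-solvable subgroups of $\Difffor$ are formally rigid in the sense of Cerveau--Moussu and Nakai, so $\rho$ has no nontrivial deformations. I would use this rigidity, together with the transverse projective structure carried by the germ of foliation associated to $\rho$, to reduce $\bar G$ to a rigid projective datum and then appeal to the factorization theorems for rigid quasi-projective representations and for transversely projective foliations in order to produce the orbicurve. The principal obstacle lies precisely in this conversion: the target $\Difffor$ is neither linear nor reductive, so the Corlette--Simpson theory does not apply directly, and one must first turn the formal rigidity of a non-solvable holonomy group into an honest algebraic factorization datum before the quasi-projective machinery can be brought to bear.
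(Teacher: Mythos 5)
Your argument that $Z(G)$ is finite is correct, and in fact more explicit than what the paper records (the paper extracts the relevant finite central subgroup from the fibration itself in its final section). The genuine gap is your non-solvable case, which you yourself flag as unresolved: ``rigidity'' of non-solvable subgroups of $\Difffor$ in the sense of Cerveau--Moussu and Nakai is a uniqueness statement about conjugacies between pseudogroups, and there is no known mechanism converting it into a factorization statement for a representation of a quasi-projective group --- as you observe, $\Difffor$ is not linear, so no Corlette--Simpson-type machinery applies, and your sketch supplies no substitute. Note also that by your own reduction (a solvable, not virtually abelian subgroup of $\Difffor$ has infinite linear part, since a solvable subgroup of the tangent-to-identity group $\Difffor_1$ is abelian), the entire finite-linear-part regime lands in your unproved case: after a finite \'etale cover one has trivial linear part and $G$ non-solvable, and this is exactly the hardest half of the theorem. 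The paper's route there is completely different and uses no rigidity: it extends $\rho$ over a maximal part of the boundary divisor, shows via the normal forms for elements of $\Difffor_1$ that the holonomy near each connected component of the non-extension divisor $E$ is abelian (Lemma \ref{L:abelian}), excludes negative definite intersection matrices (Corollary \ref{C:generalnegative}, after Mumford), uses residual finiteness of $G$ together with non-solvability to pass to a ramified cover where $E$ acquires at least three connected components, and then invokes Totaro's criterion \cite{Totaro} to produce a fibration $f\colon \overline{X}\to C$ contracting the boundary; triviality of $\rho$ on the fibers then follows from Lemmas \ref{L:abelian} and \ref{L:normal}.

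Two further points. First, your dichotomy (solvable versus non-solvable) is coarser than the paper's (infinite versus finite linear part) in a way that costs you: the cohomological argument you run in the solvable case works verbatim for \emph{any} non-abelian representation with infinite linear part, solvable or not (Theorem \ref{T:infinitelinear}), so a large portion of what you consign to the intractable rigidity case is in fact handled by your own first method. Second, even your solvable case has an unjustified step: Arapura and Artal Bartolo--Cogolludo--Matei \cite{Bartolo} give factorization of the \emph{character} $\chi$ through an orbicurve, but you must then factor the full affine representation, i.e. show that the translation cocycle also comes from the base --- equivalently, that $\rho'$ is trivial, and not merely by translations, on the fibers. The paper proves precisely such a lifting statement (Proposition \ref{P:keyinfty}), jet by jet, using Deligne's semisimplicity theorem applied to the monodromy of the homotopy exact sequence of the fibration; alternatively \cite[Theorem 3.1]{CousinPereira} treats the affine case. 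Without some such argument, ``this fibration yields the desired factorization of $\rho'$'' is an assertion, not a proof.
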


In the particular case where $X= \overline{X}$ is a projective manifold, this result appears as Theorem D of  \cite{leaves}. As a matter of fact, in the compact case, the result is also proved (loc.cit.) for compact K\"ahler manifolds.

\subsection{Context} Representations of fundamental groups of quasi-projective manifolds in $\Diff \subset \Difffor$ appear as holonomy representations of algebraic leaves
 of codimension one holomorphic foliations. There is a conjecture, formulated by Cerveau, Lins Neto and others \cite{croco1}, on the structure of codimension one  foliations on projective manifolds of dimension at least three which predicts that
they admit a singular  transversely projective structure (see \cite{croco5} for a precise definition) or contain a subfoliation of codimension two by algebraic leaves.  Theorem \ref{THM:A} is in accordance with this conjecture, and is potentially useful to investigate it. 


\subsection{Strategy of proof} We split the proof of Theorem \ref{THM:A} into two different parts. The first part deals with representations having infinite linear part. The strategy is the same as the one carried out in \cite{leaves}.
 The second part considers representations with finite linear part. In this second part, we either reduce to the compact case after a finite ramified covering, or we exploit the structure of the representation at a neighborhood of infinity in order to construct the fibration using a result from \cite{Totaro}, see also \cite{jvpJAG},  similarly to what has been done in \cite[Theorem A]{croco5} to describe representations of fundamental groups of quasi-projective manifolds in $SL(2,\mathbb C)$ which are not quasi-unipotent at infinity.

\section{Representations with infinite linear part}

\subsection{Monodromy of group extensions}
If a group $G$ is the extension of a group $H$ by a group $N$, i.e. if $G$ fits into the short exact sequence of groups
\begin{equation}\label{E:short exact group}
    1 \to N \to G \to H \to 1 \, ,
\end{equation}
we have a natural group morphism from $H$ to the automorphisms of the abelianization of $N$
\begin{align*}
    H & \longrightarrow \Aut\left(\frac{N}{[N,N]} \right) \\
    h & \longmapsto \lbrace [n] \mapsto \hat{h} [n] \hat{h}^{-1} \rbrace
\end{align*}
where $\hat h$ is any element in $G$ mapping to $h$. The image $\Gamma$ of $H$ into  $\Aut\left(\frac{N}{[N,N]} \right)$ will be
called the monodromy of the group of extension (\ref{E:short exact group}).

\begin{lemma}\label{L:group}
Let $\Gamma$ and $\Gamma'$ be the respective  monodromies
of the two group extensions  $ 1 \to N \to G \to H \to 1$  and $ 1 \to N' \to G' \to H' \to 1$.
If there exist  surjective morphisms $\alpha : N \to N'$, $\beta : G \to G'$, and $\gamma : N \to N'$  fitting
into the commutative diagram
$$\xymatrix{
1 \ar[r] & N \ar[r] \ar[d]^\alpha & G \ar[r] \ar[d]^\beta & H \ar[d]^\gamma \ar[r] & 1 \\
	1 \ar[r] & N' \ar[r] & G' \ar[r] & H' \ar[r] & 1
}$$
then we have a natural surjective morphism from  $\Gamma$ to $\Gamma'$.
\end{lemma}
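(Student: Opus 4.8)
The plan is to realise the desired surjection as a factorisation of the composite $\phi' \circ \gamma$, where $\phi : H \to \Aut(N/[N,N])$ and $\phi' : H' \to \Aut(N'/[N',N'])$ denote the two monodromy representations, so that $\Gamma = \phi(H)$ and $\Gamma' = \phi'(H')$. First I would record two formal consequences of the hypotheses: the surjection $\alpha$ induces a surjective homomorphism $\bar\alpha : N/[N,N] \to N'/[N',N']$ on abelianisations, and the commutativity of the left-hand square means precisely that $\beta$ restricts to $\alpha$ on $N$ (under the inclusions $N \hookrightarrow G$ and $N' \hookrightarrow G'$).

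The heart of the argument is an \emph{equivariance} property of $\bar\alpha$: for every $h \in H$ and every class $[n] \in N/[N,N]$, one has $\bar\alpha\bigl(\phi(h)[n]\bigr) = \phi'(\gamma(h))\,\bar\alpha([n])$. To verify this I would pick a lift $\hat h \in G$ of $h$, so that $\phi(h)[n] = [\hat h\, n\, \hat h^{-1}]$; applying $\bar\alpha$ and using $\beta|_N = \alpha$ turns this into the class $[\beta(\hat h)\, \alpha(n)\, \beta(\hat h)^{-1}]$ in $N'/[N',N']$. By the commutativity of the right-hand square, $\beta(\hat h)$ is a lift of $\gamma(h) \in H'$, so this last class is by definition exactly $\phi'(\gamma(h))\,\bar\alpha([n])$.

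With equivariance in hand the factorisation is immediate. The composite $\phi' \circ \gamma : H \to \Gamma'$ is surjective because $\gamma$ is surjective (whence $\phi'(\gamma(H)) = \phi'(H') = \Gamma'$). It then suffices to check $\ker\phi \subseteq \ker(\phi'\circ\gamma)$, so that $\phi'\circ\gamma$ descends to $\Gamma = H/\ker\phi$. Indeed, if $\phi(h) = \id$, then for an arbitrary class $[n'] \in N'/[N',N']$ we may write $[n'] = \bar\alpha([n])$ using surjectivity of $\bar\alpha$, and equivariance gives $\phi'(\gamma(h))[n'] = \bar\alpha\bigl(\phi(h)[n]\bigr) = \bar\alpha([n]) = [n']$; hence $\phi'(\gamma(h)) = \id$. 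The induced map $\Gamma \to \Gamma'$ is therefore a well-defined surjective homomorphism, as required.

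The only point demanding care—the main, though mild, obstacle—is the bookkeeping of the two commutative squares needed to establish that $\bar\alpha$ intertwines the two conjugation actions; once this is done, everything else is purely formal. I note in passing that the arrow written $\gamma : N \to N'$ in the statement should be read as $\gamma : H \to H'$, in accordance with the displayed diagram.
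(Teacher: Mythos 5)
Your proof is correct and follows essentially the same route as the paper: both reduce to showing $\ker\phi \subseteq \ker(\phi'\circ\gamma)$ by lifting $h$ to $\hat h$, pushing the trivial conjugation action through $\beta$, and invoking right exactness of abelianisation (surjectivity of $\bar\alpha$) to conclude that $\phi'(\gamma(h))=\id$; your explicit equivariance identity is just a spelled-out version of the paper's computation, and your remark that $\gamma$ should read $H \to H'$ correctly identifies a typo in the statement.
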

\begin{proof}
Let $\rho : H \to \Gamma \subset \Aut(N/[N,N])$ and $\rho' : H ' \to \Gamma' \subset \Aut(N'/[N',N'])$ be the
monodromy representations of two exact sequences. In order to produce a surjective morphism from $\Gamma$ to $\Gamma'$
it suffices to show that any element $h \in \ker \rho$ is mapped to the identity by the composition $\rho' \circ \gamma : H \to
\Aut(N'/[N',N'])$.

Let  $h \in \ker \rho$ be an arbitrary element and consider a lift $\hat h$ to $G$. By assumption
\[
    [n] = \hat h [n] \hat h^{-1}
\]
for any $[n] \in N/[N,N]$. Applying $\beta$ to this identity we deduce that $\beta(\hat h)$ acts trivially on the image of morphism
$[\alpha] : N/[N,N] \to N'/[N',N']$ induced by $\alpha$. Since the abelianization functor is right exact we deduce that $(\rho' \circ \gamma) (h) = \id$
as wanted.
\end{proof}

Consider now the Zariski closure of $\Gamma$ inside  of the linear\footnote{Here we implicitly assume that $N$ is finitely generated.} algebraic group $\Aut_{\mathbb C}(N/[N,N] \otimes \mathbb C)$ and call it $\Gamma_{\mathbb C}$.  The naturalness of the surjective morphism $\Gamma \to \Gamma '$ gives the following consequence.

\begin{cor}\label{C:surjective}
	Under the assumptions of Lemma \ref{L:group}, we have a natural surjective morphism of linear algebraic groups $\Gamma_{\mathbb C} \to \Gamma^{'}_{\mathbb C}$.
\end{cor}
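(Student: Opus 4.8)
The plan is to realize the abstract surjection $\Gamma \to \Gamma'$ furnished by Lemma \ref{L:group} as the restriction of a genuine morphism of linear algebraic groups, and then to extend it to the Zariski closures. Write $V = (N/[N,N]) \otimes \mathbb C$ and $V' = (N'/[N',N']) \otimes \mathbb C$, so that by definition $\Gamma_{\mathbb C} \subseteq \GL(V)$ and $\Gamma'_{\mathbb C} \subseteq \GL(V')$. Since abelianization and $- \otimes \mathbb C$ are both right exact, the surjection $\alpha$ induces a surjective $\mathbb C$-linear map $[\alpha]_{\mathbb C} : V \twoheadrightarrow V'$; set $W = \ker [\alpha]_{\mathbb C}$, so that $[\alpha]_{\mathbb C}$ identifies $V/W$ with $V'$.

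First I would record the intertwining identity that lies behind Lemma \ref{L:group}. Unwinding the definition of the monodromy, and using that the left square of the diagram forces $\beta|_N = \alpha$ while $\beta(\hat h)$ is a lift of $\gamma(h)$ to $G'$, one checks that
\[
    [\alpha]_{\mathbb C} \circ \rho(h) = \rho'(\gamma(h)) \circ [\alpha]_{\mathbb C}
\]
on $V$ for every $h \in H$. Equivalently, writing $\phi : \Gamma \to \Gamma'$ for the morphism produced by the lemma, every $g \in \Gamma$ satisfies $[\alpha]_{\mathbb C} \circ g = \phi(g) \circ [\alpha]_{\mathbb C}$; in particular each such $g$ preserves the subspace $W$.

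Next I would introduce the stabilizer $P = \{ g \in \GL(V) : g(W) = W \}$, a closed algebraic subgroup of $\GL(V)$, together with the morphism of algebraic groups $\Psi : P \to \GL(V')$ obtained by passing to the quotient $V/W \cong V'$. Since $\Gamma \subseteq P$ and $P$ is closed, we get $\Gamma_{\mathbb C} \subseteq P$, so $\Psi$ is defined on all of $\Gamma_{\mathbb C}$. By the intertwining identity, $\Psi(g)$ is characterized by $\Psi(g) \circ [\alpha]_{\mathbb C} = [\alpha]_{\mathbb C} \circ g$, which for $g \in \Gamma$ is exactly the relation defining $\phi(g)$; hence $\Psi|_\Gamma = \phi$ and $\Psi(\Gamma) = \Gamma'$. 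This is the decisive step: the surjection of Lemma \ref{L:group} is a priori only an abstract group homomorphism, and the whole argument hinges on exhibiting it as the restriction of the manifestly algebraic map $\Psi$.

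It then remains to see that $\Psi$ restricts to a surjection $\Gamma_{\mathbb C} \to \Gamma'_{\mathbb C}$. Continuity of $\Psi$ in the Zariski topology gives $\Psi(\Gamma_{\mathbb C}) = \Psi(\overline{\Gamma}) \subseteq \overline{\Psi(\Gamma)} = \overline{\Gamma'} = \Gamma'_{\mathbb C}$. For the reverse inclusion I would invoke the standard fact that the image of a homomorphism of linear algebraic groups is a closed subgroup: thus $\Psi(\Gamma_{\mathbb C})$ is closed and contains $\Gamma'$, whence it contains $\overline{\Gamma'} = \Gamma'_{\mathbb C}$. Combining the two inclusions yields $\Psi(\Gamma_{\mathbb C}) = \Gamma'_{\mathbb C}$, and the restriction of $\Psi$ is the desired surjective morphism of algebraic groups. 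Naturalness is clear from the construction, since $\Psi$ is built directly out of $[\alpha]_{\mathbb C}$ and involves no auxiliary choices.
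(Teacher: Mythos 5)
Your proof is correct and takes essentially the approach the paper intends: the paper omits the argument, appealing only to the ``naturalness'' of the surjection $\Gamma \to \Gamma'$, which is precisely your intertwining identity $[\alpha]_{\mathbb C} \circ \rho(h) = \rho'(\gamma(h)) \circ [\alpha]_{\mathbb C}$. Your elaboration via the stabilizer of $\ker [\alpha]_{\mathbb C}$, the quotient morphism to $\GL(V')$, and the closed-image theorem for morphisms of linear algebraic groups is exactly the standard way to make that one-line remark rigorous.
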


\subsection{Semi-simplicity}

The result below is a particular case of a more general result by Deligne, see \cite[Corollary 4.2.9]{Deligne}.

\begin{thm}\label{T:Deligne}
	Let $X$ and $B$ be quasi-projective manifolds. Assume $B$  endowed with a base point $b \in B$.
	Let $f: X \to B$ be a  morphism such that $R^n f_* \mathbb Q$ is a local system over $B$ for every non negative integer $n$. Let $G$ be the
	Zariski closure of the image of $\pi_1(B,b)$ in $\Aut_{\mathbb C}( (R^nf_* \C)_b)$, and let $G^0$ be the connected
	component of the identity of $G$. Then:
	\begin{enumerate}
		\item If $f$ is proper,  then $G^0$ is semi-simple.
		\item In general, the radical of $G^0$ is unipotent.
	\end{enumerate}
\end{thm}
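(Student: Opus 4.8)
The plan is to recognise that the hypotheses place us squarely in Hodge theory and then to reduce everything to the semisimplicity properties of the monodromy of a polarizable variation of Hodge structure. First I would observe that, $X$ and $B$ being smooth and all $R^nf_*\mathbb{Q}$ being local systems, the local system $\mathbb{V}:=R^nf_*\mathbb{C}$ is the complexification of a $\mathbb{Q}$-local system underlying a polarizable \emph{variation of Hodge structure} (VHS) when $f$ is proper --- the fibres are then smooth projective, $H^n$ of a fibre carries a pure polarized Hodge structure of weight $n$ whose polarization comes from the cup product together with the Hodge--Riemann relations --- and, in the general case, underlying an \emph{admissible graded-polarizable variation of mixed Hodge structure} in the sense of Deligne--Steenbrink--Zucker--Kashiwara (the fibres being smooth quasi-projective). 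Thus $G$ is the Zariski closure of the monodromy of such a variation, and the two assertions become statements about $G^0$.

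For assertion (1) I would prove, for a polarizable \emph{pure} VHS, that $G^0$ is semisimple, in two steps. The first step is Deligne's \emph{theorem of the fixed part}: the global monodromy invariants form a sub-Hodge structure which splits off as a direct summand, and applying this both to $\mathbb{V}$ and to $\mathrm{End}(\mathbb{V})$ yields complete reducibility of the local system, hence reductivity of $G^0$. The second step eliminates the connected central torus $R(G^0)$, and this is where the polarization is indispensable: a nontrivial central torus would decompose $\mathbb{V}\otimes\mathbb{C}$ into flat isotypic sub-local systems whose existence is incompatible with the Hodge--Riemann positivity and the horizontality of the period map (equivalently, $G^0$ is a connected normal subgroup of the derived Mumford--Tate group, which is semisimple). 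Hence $G^0$ is semisimple.

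Assertion (2) I would then deduce from (1) by a purely group-theoretic descent along the weight filtration. The weight filtration $W_\bullet\mathbb{V}$ is a filtration by sub-local systems preserved by the monodromy (being canonical, it is flat), and each $\mathrm{Gr}^W_k\mathbb{V}$ is a pure polarizable VHS, so its monodromy has semisimple identity component $G^0_k$ by (1). Consider $\phi:G^0\to\prod_k\Aut(\mathrm{Gr}^W_k V_b)$. Its kernel consists of automorphisms preserving $W_\bullet$ and acting trivially on the graded, so $g-\id$ is nilpotent for each such $g$ and $\ker\phi$ is unipotent (Kolchin). Each projection sends $G^0$ onto $G^0_k$, so the image $H$ embeds into $\prod_k G^0_k$ while surjecting onto every factor; a connected solvable normal subgroup of $H$ then projects to a connected solvable normal subgroup of each semisimple $G^0_k$, hence is trivial, giving $R(H)=\{1\}$. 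Therefore $R(G^0)\subseteq\ker\phi$ is unipotent, which is exactly (2).

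The main obstacle is concentrated in the Hodge-theoretic inputs rather than in the group theory: the theorem of the fixed part and the semisimplicity of polarizable variations rest on hard Lefschetz and Schmid's asymptotic analysis near the boundary, and the elimination of the central torus in step (1) --- the genuine use of the polarization --- is the crux. In the non-proper case one must moreover invoke the admissibility of the variation of mixed Hodge structure to guarantee that the weight filtration is flat and that its graded pieces are polarizable, which is precisely what makes the descent in (2) legitimate.
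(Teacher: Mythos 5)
The paper offers no proof of this statement: it is quoted verbatim as a special case of Deligne's Corollaire 4.2.9 in \emph{Th\'eorie de Hodge II}, and your proposal is in substance a reconstruction of Deligne's own argument --- fixed part theorem plus elimination of the central torus via the polarization in the proper case; a flat weight filtration with pure polarizable graded pieces, combined with the ``kernel is unipotent, image has trivial radical'' group-theoretic descent, in the general case --- so you are following exactly the route the paper relies on. Two caveats are worth recording. First, the hypotheses nowhere say that $f$ is smooth, so your opening claim that the fibres are smooth projective (resp.\ smooth quasi-projective) is not automatic: a proper, topologically locally trivial family of singular fibres also satisfies the local-system hypothesis, and then the fibrewise Hodge structures are not pure. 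This is easily patched and you should say how: the hypothesis for $n=0$ forces $f$ to be surjective, generic smoothness (characteristic zero, $X$ smooth) gives a dense Zariski open $U \subset B$ over which $f$ is smooth, base change identifies $(R^n f_* \mathbb Q)|_U$ with $R^n (f|_{f^{-1}(U)})_* \mathbb Q$, and $\pi_1(U) \to \pi_1(B)$ is surjective, so the group $G$ (up to harmless change of base point) is unchanged --- the same remark legitimizes every later ``after shrinking $B$''. Second, invoking admissibility in the Steenbrink--Zucker--Kashiwara sense is anachronistic and not actually needed: admissibility concerns degeneration at the boundary of $B$, whereas what you use --- that $W_\bullet$ is a filtration by sub-local systems whose graded pieces are, after shrinking $B$, subquotients of the local systems of smooth proper families, hence pure polarizable --- is part of Deligne's construction itself via a relative compactification. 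Granting the quoted Hodge-theoretic inputs (fair, since the paper cites them wholesale), your deduction of (2) from (1) is complete and correct: the kernel of $G^0 \to \prod_k \Aut\left(\mathrm{Gr}^W_k V_b\right)$ lies in the group of filtration-preserving automorphisms acting trivially on the graded pieces, hence is unipotent, while the image surjects onto each semisimple $G^0_k$, so its radical dies in every factor and $R(G^0)$ is unipotent.
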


Recall that the radical of a  linear algebraic group is the largest connected solvable normal subgroup. In particular, a Lie group is semi-simple if, and only if, its radical is trivial.

\subsection{Lifting factorizations}

Let $\rho : \pi_1(X,x) \to \Difffor$ be a representation. For $k \in \mathbb N$, let us denote by $\rho_k : \pi_1(X,x) \to J^k \Difffor$ the composition of $\rho$ with the natural projection/truncation $\Difffor \to J^k \Difffor$ onto the group of $k$-jets of diffeomorphisms.

\begin{prop}\label{P:keyinfty}
Let $X$ be a quasi-projective manifold and let $\rho : \pi_1(X,x) \to \Difffor$ be a non-abelian representation. If $\rho_1 : \pi_1(X,x) \to \mathbb C^* (= J^1 \Diff)$ has infinite image and factors through a (non necessarily proper) morphism $f: X \to C$  with connected fibers, then $\rho$ also factors through $f$.
\end{prop}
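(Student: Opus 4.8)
The plan is to show that $\rho$ kills the normal subgroup $K=\ker\left(f_*\colon\pi_1(X,x)\to\pi_1(C)\right)$; since $f$ has connected fibers $f_*$ is surjective, and $\rho(K)=\{1\}$ is precisely the assertion that $\rho$ factors through $f$. The hypothesis gives $\rho_1(K)=\{1\}$, so $N:=\rho(K)$ is contained in the subgroup of $\Difffor$ of diffeomorphisms tangent to the identity. Assuming $N\neq\{1\}$, I will reach a contradiction with Theorem \ref{T:Deligne}.

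First I would produce a genuine local system. Shrinking to a Zariski dense open $C^{\circ}\subseteq C$ and setting $X^{\circ}=f^{-1}(C^{\circ})$, I may assume that $f\colon X^{\circ}\to C^{\circ}$ is a locally trivial fibration all of whose sheaves $R^{n}f_{*}\Q$ are local systems, and that $C^{\circ}$ is aspherical (the case $\pi_1(C)=1$ is excluded since $\mathrm{Im}\,\rho_1$ is infinite). Let $F$ be a general fiber. From the homotopy exact sequence, $K^{\circ}:=\ker\left(\pi_1(X^{\circ})\to\pi_1(C^{\circ})\right)$ is the image of $\pi_1(F)$, so $K^{\circ}/[K^{\circ},K^{\circ}]\otimes\C\cong H_1(F,\C)$ as $\pi_1(C^{\circ})$-modules; in particular the monodromy $\Gamma^{\circ}$ of the extension $1\to K^{\circ}\to\pi_1(X^{\circ})\to\pi_1(C^{\circ})\to 1$ is, up to duality, the monodromy of $R^{1}f_{*}\C$. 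By Theorem \ref{T:Deligne}(2) its Zariski closure $\Gamma^{\circ}_{\C}$ has identity component with unipotent radical.

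Next I would compare three extensions by means of Lemma \ref{L:group}. The surjections $\pi_1(X^{\circ})\twoheadrightarrow\pi_1(X)$ and $\pi_1(C^{\circ})\twoheadrightarrow\pi_1(C)$ induce a surjection $K^{\circ}\twoheadrightarrow K$: given $\kappa\in K$, any lift to $\pi_1(X^{\circ})$ maps into $\ker\left(\pi_1(C^{\circ})\to\pi_1(C)\right)$, a loop around the deleted fibers, which is also the image of a meridian $\delta$ of $X^{\circ}$ bounding in $X$; correcting the lift by $\delta$ keeps its image $\kappa$ in $\pi_1(X)$ and lands it in $K^{\circ}$. Thus Lemma \ref{L:group} applies to the vertical maps from the extension over $C^{\circ}$ to $1\to K\to\pi_1(X)\to\pi_1(C)\to 1$, and Corollary \ref{C:surjective} gives $\Gamma^{\circ}_{\C}\twoheadrightarrow\Gamma_{\C}$; applying the same tools to the vertical maps induced by $\rho$ onto $1\to N\to G\to G/N\to 1$ gives $\Gamma_{\C}\twoheadrightarrow\Gamma'_{\C}$, where $\Gamma'$ is the monodromy on $N/[N,N]\otimes\C$. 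Since unipotency of the radical is inherited by images of connected algebraic groups, the identity component of $\Gamma'_{\C}$ has unipotent radical.

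Finally I would contradict this using the structure of $\Difffor$. Filtering by order of tangency to the identity, let $m\geq 1$ be largest with $N$ contained in the subgroup of elements of the form $z\mapsto z+O(z^{m+1})$. The coefficient of $z^{m+1}$ defines a nonzero homomorphism $N\to(\C,+)$, hence a nonzero $(G/N)$-equivariant quotient $N/[N,N]\otimes\C\twoheadrightarrow\C$, and a direct computation shows that conjugation by an element of linear part $\lambda$ acts on this line by $\lambda^{-m}$. As $\mathrm{Im}\,\rho_1$ is infinite and $m\geq 1$, the scalars $\lambda^{-m}$ form an infinite subgroup of $\C^{*}$, so $\Gamma'_{\C}$ surjects onto $\C^{*}$; its identity component therefore has a non-trivial torus quotient, contradicting the unipotency of its radical. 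Hence $N=\{1\}$ and $\rho$ factors through $f$. The main obstacle is the geometric input of the middle paragraphs: realizing the abstract monodromy of the topological extension as Deligne's monodromy of $R^{1}f_{*}\C$, which forces one to pass to a dense open where $f$ is an honest fibration and to verify, through the surjection $K^{\circ}\twoheadrightarrow K$, that Lemma \ref{L:group} indeed transports the unipotency of the radical across the deletion of the special fibers.
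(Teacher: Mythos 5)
Your overall architecture is the same as the paper's: shrink the base so that $f$ becomes a genuine topological fibration, invoke Theorem \ref{T:Deligne}(2) to get unipotent radical for the Zariski closure of the monodromy on $H_1(F,\C)$, transport this through Lemma \ref{L:group} and Corollary \ref{C:surjective}, and contradict it with the infinite scalar action $\lambda\mapsto\lambda^{\pm m}$ on the top-order tangency coefficient. The paper organizes this as an induction on the jet order $k$: at the minimal bad $k$ the kernel $\rho_k(\pi_1(F))$ sits inside $(\C,+)=\ker\bigl(J^k\Difffor\to J^{k-1}\Difffor\bigr)$, so the relevant monodromy is literally multiplication by $\lambda_\gamma^{1-k}$, and everything stays one-dimensional; your one-shot version on $N=\rho(K)$ is a legitimate repackaging of that computation. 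The problem lies in your middle step.

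The claimed surjectivity of $K^{\circ}\to K$ is false in general, and the justification you give is precisely where it breaks: a meridian $\delta$ around a component of multiplicity $m$ of a deleted fiber $f^{-1}(c_i)$ satisfies $f_*(\delta)=\gamma_i^{m}$, not $\gamma_i$. If a deleted fiber is multiple (every component of multiplicity $\geq 2$), no meridian maps to $\gamma_i$, and one computes $K/\overline{K^{\circ}}\cong\ker\bigl(\pi_1^{orb}\to\pi_1(C)\bigr)$, where $\pi_1^{orb}=\pi_1(C^{\circ})/\langle\langle\gamma_i^{m_{ij}}\rangle\rangle$; this quotient is nontrivial exactly in the multiple-fiber case (think of an elliptic fibration with a logarithmic transform over a deleted point). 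So Lemma \ref{L:group} does not apply to your triple of vertical maps, and the surjection $\Gamma^{\circ}_{\C}\twoheadrightarrow\Gamma_{\C}$ is unproved. The paper avoids this entirely by replacing $X$ with $X^{\circ}$ at the outset and proving only $\rho(\pi_1(F))=\{\id\}$, i.e.\ factorization through the orbifold fundamental group, which is what Theorem \ref{T:infinitelinear} actually uses. Your stronger conclusion $\rho(K)=\{1\}$ can then be recovered, but by a different argument that you must supply: the extra classes in $K$ map to torsion elements of $\pi_1^{orb}(C)$, and since $\rho_1(K)=\{1\}$ their $\rho$-images lie in $\Difffor_1$, which is torsion free, hence they are trivial. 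A secondary point worth flagging: Corollary \ref{C:surjective} presumes the kernels are finitely generated, and $K$ (hence $N^{ab}\otimes\C$) need not be, whereas the paper's jet-by-jet induction keeps each kernel inside $(\C,+)$ and never meets this issue.
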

\begin{proof}
Up to shrinking $X$ with respect to the Zariski topology,  we can assume that $f : X \to C$ is a topological fiber space over a non-proper algebraic curve $C$. In order to prove that $\rho$ factors through $f : X\to C$, it suffices to prove that $\rho_k$ has the same property for an arbitrary natural number $k$.

Let $k$ be smallest integer for which the factorization of $\rho_k$  through  $f$ does not hold and, aiming at a contradiction, let us consider the following commutative  diagram.
$$\xymatrix{
1 \ar[r] & \pi_1(F) \ar[r] \ar@{>>}[d] & \pi_1(X) \ar[r]^{f_*} \ar@{>>}[d]^{\rho_k} \ar@{>>}[dr]^{\rho_{k-1}}& \pi_1(C) \ar@{>>}[d] \ar[r] & 1 \\
1 \ar[r] & \rho_k(\pi_1(F))  \ar@{^{(}->}[d] \ar[r] & \rho_k(\pi_1(X))  \ar@{^{(}->}[d] \ar[r] & \rho_{k-1}(\pi_1(X)) \ar[r]  \ar@{^{(}->}[d] & 1\\
1 \ar[r] & (\mathbb  C,+) \ar[r] & J^k \Difffor \ar[r] & J^{k-1} \Difffor \ar[r] & 1
}$$
The top row is nothing but the homotopy sequence for fibrations: as we are assuming that $C$ is non-proper we have that $\pi_2(C)=0$.  On the bottom row, we have used the isomorphism between the kernel $J^k \Diff_{k-1}$ of the canonical projection  $J^k \Diff  \twoheadrightarrow J^{k-1} \Diff$ and $(\mathbb C,+)$.

Let $\Gamma \subset \Aut\left( H_1(F,\mathbb Z) \right)$ be  the monodromy group of the top row, and $\Gamma' $ be the monodromy group of the middle row. According to Theorem \ref{T:Deligne}, the   Zariski closure $G$ of $\Gamma$ in $\Aut(H_1(F,\mathbb C))$  has quasi-unipotent radical. In particular, $G$ has no (algebraic) surjection to $\mathbb C^*$.
On the other hand since we are assuming that $\rho_1(\pi_1(X))  \subset {\mathbb C}^*$ is infinite, we have that $\Gamma'$  is isomorphic to a Zariski dense subgroup of $\mathbb C^*$. These two facts contradict Corollary \ref{C:surjective}, showing that there is no such smallest $k$. We conclude that the representation $\rho$ factors through $f$.
\end{proof}

\subsection{Synthesis} Theorem \ref{THM:A} for representations with infinite linear part 
follows from the result below.

\begin{thm}\label{T:infinitelinear}
Let $X$ be a quasi-projective manifold and $\rho : \pi_1(X) \to \Difffor$ a representation. Suppose that the image of $\rho_1$ is infinite. If $\rho$ is not abelian  then there exists a finite \'etale Galois covering $\pi: Y \to X$, a morphism $f: Y \to C$, and  a representation $\psi: \pi_1^{orb} (C ) \to \Difffor$ such that the diagram
$$\xymatrix{
\pi_1(Y)   \ar[r]^{f_*} \ar[d]_{\pi_*} & \pi_1^{orb}(C) \ar[d]^{\psi}\\
		\pi_1(X) \ar[r]^{\rho}  & \Difffor
}$$
commutes.
\end{thm}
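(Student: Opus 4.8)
The plan is to produce, after passing to a finite \'etale Galois cover, a fibration onto a curve through which $\rho_1$ factors, and then to let Proposition~\ref{P:keyinfty} promote this to a factorization of the full representation $\rho$. Since $\rho$ is not abelian while $\rho_1 : \pi_1(X) \to \C^* = J^1\Difffor$ necessarily is (its image lies in the abelian group $\C^*$), there is a smallest integer $k \ge 2$ for which $\rho_k$ fails to be abelian. The group $V_k := \ker\bigl(J^k\Difffor \to J^{k-1}\Difffor\bigr) \cong (\C,+)$ is central in $J^k\Difffor$, and a direct computation shows that the linear part acts on it through the character $\chi^{1-k}$, where I write $\chi := \rho_1$.

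The first step is to extract from the non-abelianity of $\rho_k$ a nonzero class in the twisted cohomology $H^1(X, L_\psi)$, where $\psi := \chi^{1-k}$ and $L_\psi$ is the rank-one local system with monodromy $\psi$. Indeed, since $\rho_{k-1}$ is abelian every commutator $[\rho_k(g),\rho_k(h)]$ lands in the central subgroup $V_k$, and restricting $\rho_k$ to $N := \ker\rho_{k-1} \subseteq \ker\chi$ yields a homomorphism $c : N \to V_k \cong \C$. This $c$ is nonzero --- otherwise $\rho_k$ would factor through the abelian group $\rho_{k-1}(\pi_1(X))$, contradicting the choice of $k$ --- and it is equivariant for the conjugation action, i.e.\ for $\psi$. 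Feeding $c$ into the inflation--restriction sequence for $1 \to N \to \pi_1(X) \to \rho_{k-1}(\pi_1(X)) \to 1$ (with coefficients in $\C_\psi$) should produce the desired nonzero class in $H^1(X,L_\psi)$; the one point requiring care here is the bookkeeping of the decomposable cross-terms coming from the lower jets $\rho_{k-1}$ (equivalently, the transgression $d_2$ into $H^2$), which one must show does not obstruct lifting the class. Crucially, because $\rho_1$ has infinite image the character $\chi$ is non-torsion, and hence so is $\psi = \chi^{1-k}$ as $k \ge 2$.

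The geometric heart of the argument is then the structure theorem for cohomology jump loci on quasi-projective manifolds (Arapura, refined by Budur--Wang), which plays here the role that the Beauville--Siu type factorization plays in the compact case of \cite{leaves}. The nonzero class above places $\psi$ in the jump locus $\Sigma^1(X) = \{ L \in \Hom(\pi_1(X),\C^*) : H^1(X,L) \neq 0\}$; since $\psi$ is non-torsion it must lie on a positive-dimensional component, and every such component is pulled back from an orbicurve. Concretely, there is a morphism $f : X \to C$ onto an orbicurve, a torsion character $\theta$, and a character $\eta$ on $C$ with $\psi = \theta \cdot f^*\eta$. A finite \'etale Galois cover $\pi : Y \to X$ trivializes $\theta$, and after Stein factorization we may assume $f$ (still written $f : Y \to C$) has connected fibers with $\psi\circ\pi_* = f^*\eta$. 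As $\psi$ is a nonzero power of $\chi$, the restriction of $\chi\circ\pi_*$ to a general fiber then has finite image; passing to one further finite \'etale Galois cover we arrange that $\rho_1\circ\pi_* = \chi\circ\pi_*$ itself is trivial on the fibers of $f$, i.e.\ factors through $f_*$.

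At this point $\rho_1\circ\pi_*$ has infinite image and factors through $f : Y \to C$ with connected fibers, so Proposition~\ref{P:keyinfty} applies to $\rho\circ\pi_*$ on $Y$ and shows that $\rho\circ\pi_*$ is trivial on $\pi_1$ of a general fiber. Since $\pi_1^{orb}(C)$ is precisely the quotient of $\pi_1(Y)$ by the normal subgroup generated by the image of $\pi_1$ of a general fiber (the orbifold multiplicities recording the multiple fibers of $f$), the representation descends to the sought-after $\psi : \pi_1^{orb}(C) \to \Difffor$ making the diagram commute. I expect the main obstacle to be precisely this last packaging together with its preparation: controlling the torsion translate $\theta$ and the passage from a power of $\chi$ back to $\chi$ itself, so that the factorization is genuinely through the orbifold fundamental group --- this is exactly what forces the finite \'etale Galois cover $\pi : Y \to X$ into the statement.
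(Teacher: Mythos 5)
Your proposal follows the same architecture as the paper's proof: locate the smallest $k\ge 2$ with $\rho_k$ non-abelian, extract from it a nonzero class in $H^1(X,\C_\psi)$ with $\psi=\rho_1^{\otimes(1-k)}$, invoke the Arapura/Artal Bartolo--Cogolludo--Matei structure theory of characteristic varieties of quasi-projective manifolds to factor $\psi$ through an orbicurve, deduce the factorization of $\rho_1$ itself, and then let Proposition \ref{P:keyinfty} promote this to the full $\rho$. Your handling of torsion (trivialize the torsion translate $\theta$ by one cover, then make the image of $\chi$ torsion free by another) is a harmless reshuffling of the paper's opening move, which passes \emph{once}, at the outset, to a Galois \'etale cover on which $\rho_1$ has torsion-free image; then $\psi(\ker f_*)=1$ forces $\rho_1(\ker f_*)$ to consist of $(k-1)$-torsion, hence to be trivial, so $\rho_1$ factors with no further cover.

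The genuine gap is exactly the step you flag but do not resolve: lifting the homomorphism $c: N\to V_k\cong\C$, $N=\ker\rho_{k-1}$, through inflation--restriction requires the transgression $d_2(c)\in H^2(Q,\C_\psi)$, $Q=\rho_{k-1}(\pi_1(X))$, to vanish, and you offer no argument. The paper avoids the issue altogether by a normalization you never use: since $\rho_1$ has infinite image, pick $\gamma_0$ with multiplier of infinite order and conjugate in $\Difffor$ so that $\rho(\gamma_0)=\lambda_0 z$; abelianity of $\rho_{m-1}$ then forces $\rho_{m-1}(\gamma)(z)=\lambda_\gamma z$ for \emph{every} $\gamma$, so $\rho_m(\gamma)(z)=\lambda_\gamma z+a_\gamma z^m$ and $\gamma\mapsto\big(a_\gamma\lambda_\gamma^{-m},\lambda_\gamma^{1-m}\big)\in\Aff(\C)$ is literally a homomorphism --- an honest cocycle, nontrivial in $H^1(X,\C_\psi)$ because $\psi\neq 1$ and the affine representation is non-abelian, with no spectral-sequence bookkeeping at all. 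Your route can in fact be completed without linearization: $Q$ is finitely generated abelian and $\psi$ has infinite image on it, so splitting off an infinite cyclic direct factor on which $\psi$ takes a value of infinite order, and using that $H^*(\mathbb Z,\C_\psi)$ vanishes for a nontrivial twist while finite groups have no higher cohomology with $\C$-coefficients, gives $H^2(Q,\C_\psi)=0$; but as written this is an assertion of hope, not a proof, and it is the crux of your first step. Finally, a slip worth correcting: $V_k$ is \emph{not} central in $J^k\Difffor$ --- conjugation by $\lambda z$ sends $z+az^k$ to $z+a\lambda^{1-k}z^k$, so $V_k$ is only normal (central in the tangent-to-identity subgroup), which is in fact consistent with, and required by, the $\chi^{1-k}$-twist you correctly use immediately afterwards.
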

\begin{proof}
After replacing $X$  by a suitable \'{e}tale Galois covering $Y$, we can assume that the linear part of $\rho$ has  torsion free image. We still denote by $\rho$ the induced representation of $\pi_1(Y)$ in $\Difffor$.
Let $\gamma_0\in \pi_1(Y)$ such that $\lambda_{\gamma_0}$ has infinite order (here $\lambda_{\gamma_0}$ denotes the linear part of $\rho(\gamma_0)$). Then, after performing a suitable conjugation in $\Difffor$, one can assume that $\rho (\gamma_0)=\lambda_0 z$ (\cite [Theorem 5.1 and references therein] {leaves}). 
Let  $m\geq 2$ be the first positive number such that $\rho_m : \pi_1(X) \to J^m \Difffor$ has non abelian image. It is equivalent to say that, for every $\gamma\in\pi_1 (X)$, $\rho_m (\gamma)(z)= \lambda_\gamma z +a_\gamma z^m$ with $\gamma\to a_\gamma$ a non identically zero map. Indeed, the fact that $\rho_{m-1} (\gamma)(\lambda_{\gamma_0}z)=\lambda_{\gamma_0}\rho_{m-1} (\gamma)(z)$ for any $\gamma\in\pi_1(X)$ can be rewritten in the following way: $\rho_{m-1} (\gamma)(z)=\lambda_\gamma z$. Since $\rho_m(\pi_1(X))$ is not abelian, we infer that $\rho_m (\gamma)$ has the form above with $a_\gamma$ not identically zero.

In particular, ${ \rho_1}^{\otimes {1-m}}$, the $(1-m)$-th power of the linear part of $\rho$  possesses a nontrivial affine extension, namely $$\gamma\mapsto \big(a_\gamma \lambda_\gamma ^{-m},\lambda_\gamma ^{1-m}\big)\in \C\rtimes{\C}^*=\mbox{Aff}(\C)$$ i.e $H^1(X,{\rho_1}^{\otimes {1-m}})\not=0$. It follows from a result by Arapura \cite{Arapura} later refined in \cite[Theorem 1]{Bartolo} (see also \cite[Theorem 3.1]{CousinPereira})
that there exists a surjective morphism $f$ from $X$ to an orbicurve $C$  such that   $\rho_1^{\otimes{1-m}}$  factors
through $f_* : \pi_1(Y) \to \pi_1^{orb}(C)$. Since we are assuming that $\rho_1$ has torsion free image, we deduce that the linear part of $\rho$ also factors through $f_*$. Since $\rho_1$ is infinite, Proposition \ref{P:keyinfty} concludes the proof.
\end{proof}

\section{Representations with trivial linear part}

\subsection{Subgroups of $\Diff$ tangent to the identity}
For  $k \in \mathbb N$, we will denote by $\Difffor_k$ the subgroup of $\Difffor$ composed 
of the formal biholomorphisms which are tangent to the identity up to order $k$. Therefore
$\Difffor_0 = \Difffor$ and $\Difffor_1$ is the subgroup of elements with trivial linear part.

We recall the characterization of maximal abelian groups of $\Difffor_1$
(\cite[\S 1.4]{Frankpseudo}).

 \begin{thm}\label{T:formalG}
Let $G \subset \Difffor_1$ be a subgroup. If $G$ is abelian, then
there exists $ {\varphi} \in \Difffor$ such that $ \varphi_* G$ is a subgroup of one of 

 $\mathbb E_{k,\lambda} = \{ f(z) = \exp\left(t v_{k,\lambda}\right); , t \in \mathbb C\}$, for some $k \in \mathbb N^*$ and $\lambda \in \mathbb C$ and 
\[v_{k,\lambda} = \frac{z^{k+1}}{1+ \lambda {z^k}} \frac{\partial}{\partial z}
\]

\end{thm}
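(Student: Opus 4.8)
The plan is to show that an abelian $G \subset \Difffor_1$ is always contained in a single formal one-parameter flow, and then to bring that flow into the normal form $\mathbb E_{k,\lambda}$. The main tool is the formal infinitesimal generator: every $f \in \Difffor_1$ admits a unique formal vector field $\log f = a(z)\partial_z$ with $\ord(a) \geq 2$ such that $f$ is the time-one map $\exp(\log f)$ of its flow. This logarithm is a bijection onto the formal vector fields vanishing to order $\geq 2$, and it is natural under conjugation, $\log(\varphi f \varphi^{-1}) = \varphi_*(\log f)$ for $\varphi \in \Difffor$; both facts follow from the standard construction of $\log$ together with its uniqueness. I will also invoke the classical formal normal form, which asserts that any nonzero formal vector field vanishing to order $\geq 2$ is formally conjugate to $v_{k,\lambda}$ for some $k \in \mathbb N^*$ and $\lambda \in \C$.

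First I would prove that commuting elements have commuting generators: if $f, g \in \Difffor_1$ satisfy $fg = gf$, then $[\log f, \log g] = 0$. Writing $w = \log g$ and $g_t = \exp(tw)$, naturality gives $\log(g_t f g_t^{-1}) = e^{t\,\mathrm{ad}_w}(\log f)$. Since $w$ vanishes to order $\geq 2$, the operator $\mathrm{ad}_w = [w,\cdot]$ strictly raises the order of vanishing, so for each fixed power of $z$ only finitely many terms of the series contribute and each coefficient of the formal vector field $e^{t\,\mathrm{ad}_w}(\log f)$ is a polynomial in $t$. On the other hand, an easy induction from $fg = gf$ shows $g_n f g_n^{-1} = f$ for every integer $n$, so this polynomial-valued vector field equals $\log f$ at all $t \in \mathbb Z$ and is therefore constant in $t$; differentiating at $t = 0$ yields $[w, \log f] = 0$.

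Next comes the one-dimensional centralizer computation: for $a(z)\partial_z$ and $b(z)\partial_z$ one has $[a\partial_z, b\partial_z] = (ab' - a'b)\partial_z = a^2\,(b/a)'\,\partial_z$, so two nonzero formal vector fields in one variable commute if and only if they are proportional over $\C$. Hence, fixing a non-identity $f \in G$ and setting $v = \log f \neq 0$, every $g \in G$ satisfies $\log g = c_g v$ for some $c_g \in \C$, that is $g = \exp(c_g v)$, and therefore $G \subseteq \{\exp(tv) : t \in \C\}$. Choosing $\varphi \in \Difffor$ with $\varphi_* v = c\,v_{k,\lambda}$ via the normal form theorem (the scalar $c$ is harmless, as it only reparametrizes the flow), we obtain $\varphi_* G \subseteq \{\exp(t\,v_{k,\lambda}) : t \in \C\} = \mathbb E_{k,\lambda}$; the case $G = \{\id\}$ is trivial. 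The main obstacle is precisely the passage from the commutation relation $fg = gf$ to $[\log f, \log g] = 0$: the polynomial-in-$t$ argument above is what makes this clean, and it relies essentially on $\mathrm{ad}_w$ increasing the order of vanishing, a phenomenon special to $\Difffor_1$.
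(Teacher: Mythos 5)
Your proof is correct, and it reconstructs what is in effect the paper's (absent) proof: the paper states this theorem without argument, recalling it from \cite[\S 1.4]{Frankpseudo}, and your route --- the unique formal infinitesimal generator $\log f$ of order $\geq 2$, the fact that commuting elements of $\Difffor_1$ have commuting and hence $\C$-proportional generators (so $G$ sits inside a single flow $\{\exp(tv)\}$), and the formal normal form $v_{k,\lambda}$ --- is precisely the standard argument of that reference, whose Propositions 1.3.1 and 1.3.2 the paper invokes for the same ingredients in Lemmas \ref{L:normalform} and \ref{L:normal}. As a minor streamlining, note that your interpolation in $t$ can be skipped: $gfg^{-1}=f$ and naturality give $e^{\mathrm{ad}_w}(\log f)=\log f$ with $w=\log g$, and since $\mathrm{ad}_w$ strictly raises the vanishing order, the lowest-order term of $\bigl(e^{\mathrm{ad}_w}-\mathrm{id}\bigr)(\log f)$ is exactly $[w,\log f]$, which must therefore vanish.
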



\begin{lemma}\label{L:normalform}
If $f \in \Difffor_1$ is different from identity,  then there exits a unique one-dimensional vector space $V$ of formal meromorphic
$1$-forms preserved by $f$. Moreover,  $f^* \omega= \omega$ for every $\omega \in V$.
\end{lemma}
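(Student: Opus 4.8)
The plan is to reduce the statement to the existence and uniqueness of the formal infinitesimal generator of $f$. Since $\Difffor_1$ is pro-unipotent, any $f \neq \id$ admits a unique formal logarithm: there is a unique formal vector field $X = b(z)\,\partial/\partial z$ with $b(z) \in z^2\C[[z]]$, $b \neq 0$, such that $f = \exp(X)$ (the time-one map of the flow of $X$); here $\ord(b) = k+1 \geq 2$ records the multiplicity of $f$. I would then produce the invariant form explicitly as the dual of $X$, namely
\[
\omega_X = \frac{dz}{b(z)},
\]
a genuine formal meromorphic $1$-form since $b$ vanishes at the origin. Because we are in dimension one every $1$-form is closed, so the Lie derivative reduces to $L_X \omega = d(\omega(X))$; as $\omega_X(X) \equiv 1$ we get $L_X \omega_X = 0$, and hence $f^*\omega_X = \exp(L_X)\omega_X = \omega_X$. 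This already gives the existence part with $V = \C\cdot\omega_X$, together with the ``Moreover'' statement that $f$ fixes every form in $V$.

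For uniqueness, suppose $V' = \C\cdot\eta$ is any line of formal meromorphic $1$-forms preserved by $f$, so $f^*\eta = c\,\eta$ for some $c \in \C^*$. Writing $h := \eta/\omega_X$, a formal meromorphic function, the invariance $f^*\omega_X = \omega_X$ turns the relation into $h\circ f = c\, h$. Comparing the leading (lowest-order) coefficients of both sides, and using $f(z) = z + \hot$ so that $f^m$ has leading term $z^m$, forces $c = 1$. It then remains to show that $h \circ f = h$ implies $h$ constant, i.e. that the nontrivial parabolic germ $f$ has no nonconstant formal meromorphic first integral; this yields $\eta \in \C\cdot\omega_X$ and finishes uniqueness.

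The crux, and the step I would treat most carefully, is this last first-integral statement, which I would settle through an order estimate for $X$. For any Laurent series $g$ one has $\ord(Xg) \geq \ord(g) + k$, since $b$ has order $k+1$ and differentiation lowers the order by one; thus each application of $X$ strictly raises the order whenever the result is nonzero. Writing $h\circ f = \exp(X)h = h + Xh + \tfrac12 X^2h + \cdots$, the terms $X^j h$ with $j \geq 2$ have order strictly larger than that of $Xh$, so the leading term of $h\circ f - h$ is exactly that of $Xh$. Hence $h\circ f = h$ forces $Xh = 0$, i.e. $b\,h' = 0$, i.e. $h' = 0$; as $b \neq 0$ this gives $h$ constant. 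The only remaining subtlety to verify along the way is the formal identity $f^* = \exp(L_X)$ on meromorphic forms (and $\exp(X)$ on functions), obtained by differentiating the flow, which holds termwise in the pro-unipotent formal setting.
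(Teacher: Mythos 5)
Your proof is correct, and while it lands on the same invariant line as the paper, it gets there through a different key input. The paper's proof invokes the formal normal form of \cite[Proposition 1.3.1]{Frankpseudo}, writing $f = \varphi^{-1}\circ \exp(v_{k,\lambda})\circ \varphi$ and pulling back the explicit closed form $\frac{dz}{z^{k+1}}+\lambda\frac{dz}{z}$, which is precisely the dual form $dz/v_{k,\lambda}$; you instead use only the existence of the formal infinitesimal generator $X=b(z)\,\partial/\partial z$ with $f=\exp(X)$ and take $\omega=dz/b$ directly. Since $\log f=\varphi^{*}v_{k,\lambda}$ in the paper's notation, the two constructions produce the same line of forms, but yours bypasses the classification (the invariants $k,\lambda$ and the conjugating $\varphi$) at the modest cost of verifying the termwise identities $g\circ f=\exp(X)g$ and $f^{*}=\exp(L_X)$, which do hold by $z$-adic convergence because $X$ raises orders by at least $k\geq 1$. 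For uniqueness, both arguments reduce to the functional equation $h\circ f=\mu h$ for $h\in\C((z))$; where the paper is terse (``comparing Laurent series''), your order estimate $\ord(Xg)\geq \ord(g)+k$, which isolates $Xh$ as the leading term of $h\circ f-h$ and so excludes nonconstant formal meromorphic first integrals, makes that step fully explicit --- just keep the convention $\ord(0)=\infty$, since individual terms $X^{j}h$ with $j\geq 2$ may genuinely vanish (for $X=z^{k+1}\,\partial/\partial z$ and $h=z^{-k}$ one has $Xh$ constant and $X^{2}h=0$), a case your phrasing ``whenever the result is nonzero'' already accommodates. As a side benefit, the description $\omega=dz/b$ with $\ord(b)=k+1$ immediately yields the pole order $k_f+1$ invoked later in the proof of Lemma \ref{L:preservesV}, which the paper instead reads off from the normal form.
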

\begin{proof}
Let $f \in \Difffor_1$ be an element different from the identity. According to \cite[Proposition 1.3.1]{Frankpseudo} there exist $\varphi \in \Difffor$,  $k \in \mathbb N$, and $\lambda \in \mathbb C$
such that $f = \varphi^{-1} \circ \exp(v_{k,\lambda}) \circ \varphi$ 
It turns out that the formal meromorphic $1$-form
\[
    \omega = \varphi^* \left(  \frac{dz}{z^{k+1}}   + \lambda \frac{dz}{z} \right)
\]
is preserved by $f$.  Let now $\omega'$ be another meromorphic $1$-form such that
$f^* \omega' = \mu \omega'$ for some $\mu \in \mathbb C^*$. Since $\omega'= h \omega$ for some $h \in \mathbb C((z))$, it follows
that  $f^* h = \mu h$. Comparing Laurent series we deduce that $h \in \mathbb C^*$ and  $\mu=1$. Therefore $V = \mathbb C \omega$
is the unique one dimensional vector space of formal meromorphic $1$-forms preserved by $f$.
\end{proof}

\begin{lemma}\label{L:preservesV}
If $G \subset \Difffor_1$ is a subgroup which preserves a one-dimensional
vector space $V$ of formal meromorphic $1$-forms, then $G$ is an abelian subgroup.
\end{lemma}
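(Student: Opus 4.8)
The plan is to leverage Lemma~\ref{L:normalform} to upgrade the hypothesis ``$G$ preserves the line $V$'' into the far stronger assertion that a single generator of $V$ is genuinely \emph{fixed} by every element of $G$, and then to recognize the stabilizer of such a form inside $\Difffor_1$ as a formal flow, which is abelian by Theorem~\ref{T:formalG}. If $G=\{\id\}$ there is nothing to prove, so I would fix a generator $\omega$ of $V$ and a non-identity $g_0\in G$. For any $f\in G$ with $f\neq\id$, the line $V$ is $f$-preserved by assumption; but Lemma~\ref{L:normalform} supplies a \emph{unique} $f$-preserved line, on which $f^*$ acts as the identity. By uniqueness that line must be $V$, so $f^*\omega=\omega$. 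Hence every $f\in G$ fixes $\omega$, and $G$ is contained in the stabilizer $\Sigma=\{f\in\Difffor_1 : f^*\omega=\omega\}$.

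Next I would pass to the dual formal vector field. In dimension one a meromorphic form $\omega=dz/a(z)$ and the vector field $v=a(z)\partial_z$ determine one another via $\omega(v)=1$, and a direct computation shows that $f^*\omega=\omega$ is equivalent to the single relation $a(z)f'(z)=a(f(z))$, that is, to $f_*v=v$. By the proof of Lemma~\ref{L:normalform}, $\omega$ is, after a formal change of coordinate $\varphi$, the model $\tfrac{dz}{z^{k+1}}+\lambda\tfrac{dz}{z}$; here $k\geq 1$, since $g_0\in\Difffor_1$ is tangent to the identity. Consequently the dual field $v$ is $\varphi$-conjugate to $v_{k,\lambda}$ and vanishes to order $k+1\geq 2$, so its formal flow $\mathbb E=\{\exp(tv) : t\in\C\}$ lies in $\Difffor_1$ and is conjugate to $\mathbb E_{k,\lambda}$.

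The conclusion would then follow from a centralizer argument. Each element of $\mathbb E$ preserves $v$, hence fixes $\omega$, so $\mathbb E\subseteq\Sigma$. Conversely, for $f\in\Sigma$ the relation $f_*v=v$ yields $f\exp(tv)f^{-1}=\exp(t\,f_*v)=\exp(tv)$, so $f$ centralizes $\mathbb E$. Since $\mathbb E$ is conjugate to $\mathbb E_{k,\lambda}$, Theorem~\ref{T:formalG} shows it is a \emph{maximal} abelian subgroup of $\Difffor_1$, and a maximal abelian subgroup is its own centralizer: if $f$ commutes with all of $\mathbb E$ then $\langle\mathbb E,f\rangle$ is abelian, whence $f\in\mathbb E$. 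Therefore $\Sigma=\mathbb E$, and $G\subseteq\mathbb E$ is abelian.

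I expect the decisive step to be the identification of the centralizer, namely the passage from ``$f$ centralizes $\mathbb E$'' to ``$f\in\mathbb E$'': this requires that $\mathbb E$ be \emph{maximal} abelian rather than merely abelian, and it is exactly here that Theorem~\ref{T:formalG} is essential. The one-dimensional duality $f^*\omega=\omega\Leftrightarrow f_*v=v$ is routine but must be recorded with care, including the verification that the pole order of $\omega$ forces $v$ to vanish to order at least two, so that the flow $\mathbb E$ remains inside $\Difffor_1$.
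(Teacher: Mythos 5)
Your proof is correct, but it takes a genuinely different route from the paper's. The paper argues by contraposition in three lines: if $G$ were non-abelian, it would contain two non-trivial elements with different tangency orders $k_f\neq k_g$ (implicitly, because a non-trivial commutator of elements tangent to the identity at order $k$ is tangent at order $\geq 2k$), and the unique invariant lines these elements carry by Lemma~\ref{L:normalform} are generated by forms with poles of orders $k_f+1\neq k_g+1$, so they cannot both coincide with the one-dimensional space $V$. You instead argue directly: uniqueness in Lemma~\ref{L:normalform} forces $f^*\omega=\omega$ for every $f\in G$, and you then dualize to a vector field and identify the stabilizer of $\omega$ in $\Difffor_1$ with the formal flow $\mathbb E=\{\exp(tv)\}$. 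Your approach is longer but buys more: it places $G$ inside a group isomorphic to $(\C,+)$, which is exactly the stronger conclusion of Lemma~\ref{L:normal}, where the paper quotes \cite[Proposition 1.3.2]{Frankpseudo} for the centralizer identification that you re-derive by hand. One step should be tightened: Theorem~\ref{T:formalG} does not literally assert that $\mathbb E_{k,\lambda}$ is a \emph{maximal} abelian subgroup, only that every abelian subgroup is conjugate into some $\mathbb E_{k',\lambda'}$. To justify your ``whence $f\in\mathbb E$'', take $\psi$ with $\psi_*\langle\mathbb E,f\rangle\subset\mathbb E_{k',\lambda'}$; since the formal exponential is injective on vector fields vanishing to order $\geq 2$, the inclusion $\psi_*\mathbb E\subset\mathbb E_{k',\lambda'}$ forces $\psi_*v=c\,v_{k',\lambda'}$ with $c\neq 0$, hence $\psi_*\mathbb E=\mathbb E_{k',\lambda'}$ exactly, and only then does $\psi_*f\in\psi_*\mathbb E$ give $f\in\mathbb E$. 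This is routine but currently elided; with it supplied, your argument is complete.
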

\begin{proof}
If $G$ is not abelian, then there exist elements $f, g \in G$ of different orders of tangency to the identity say $k_f$ and $k_g$.
Therefore the $1$-forms associated with them have (see the proof of Lemma \ref{L:normalform}) poles of order $k_f +1$ and $k_g+1$ and cannot belong to the same one dimensional
vector space.
\end{proof}

\begin{lemma}\label{L:normal}
Let $G \subset \Difffor_1$ be a subgroup which contains a non-trivial (i.e. different from the identity)  abelian normal subgroup $H$.
Then there exists a non trivial (formal) meromorphic $1$-form $\omega = \sum_{i=-k}^{\infty} a_i z^i dz$, unique up to multiplication in ${\C}^*$, 
such that every element $g \in G$ satisfies $g^* \omega = \omega$. In particular, $G$ itself is abelian and
contained in a subgroup of $\Difffor_1$ isomorphic to $(\mathbb C,+)$.
\end{lemma}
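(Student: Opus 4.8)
The plan is to extract from the abelian normal subgroup $H$ a canonical line of invariant $1$-forms, to show that this line is preserved by all of $G$ by exploiting the normality of $H$, and then to deduce the commutativity and the normal form from Lemma \ref{L:preservesV} and Theorem \ref{T:formalG}.

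First I would fix $h\in H$ different from the identity and apply Lemma \ref{L:normalform} to obtain the unique line $V=\mathbb{C}\omega$ of formal meromorphic $1$-forms with $h^{*}\omega=\omega$. The first observation is that any $f\in\Difffor_1$ commuting with $h$ satisfies $f^{*}\omega\in V$: from $f\circ h=h\circ f$ one gets $h^{*}(f^{*}\omega)=(f\circ h)^{*}\omega=(h\circ f)^{*}\omega=f^{*}(h^{*}\omega)=f^{*}\omega$, so $f^{*}\omega$ is an eigenform of $h^{*}$ and hence lies in $V$ by the uniqueness in Lemma \ref{L:normalform}. Since $H$ is abelian, this applies to every $h'\in H$; each non-identity $h'$ therefore has $\omega$ as an eigenform of $(h')^{*}$, and Lemma \ref{L:normalform} applied to $h'$ forces the eigenvalue to be $1$ and the invariant line of $h'$ to be exactly $V$.

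The key step is to propagate this to an arbitrary $g\in G$. Set $\tilde h=g^{-1}\circ h\circ g$, which lies in $H\setminus\{\id\}$ by normality. Using $(g^{-1}\circ h\circ g)^{*}=g^{*}\circ h^{*}\circ (g^{-1})^{*}$ together with $(g^{-1})^{*}\circ g^{*}=\id$, I compute
\[
\tilde h^{*}(g^{*}\omega)=g^{*}h^{*}(g^{-1})^{*}g^{*}\omega=g^{*}h^{*}\omega=g^{*}\omega,
\]
so $g^{*}\omega$ is an eigenform of $\tilde h^{*}$ with eigenvalue $1$. Since the invariant line of $\tilde h$ is $V$ by the previous paragraph, we get $g^{*}\omega\in V$, that is $g^{*}\omega=c_{g}\omega$ for some $c_{g}\in\mathbb{C}^{*}$; applying Lemma \ref{L:normalform} to $g$ itself (when $g\neq\id$) shows $c_{g}=1$, hence $g^{*}\omega=\omega$. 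In particular $G$ preserves the one-dimensional space $V$, so Lemma \ref{L:preservesV} yields that $G$ is abelian.

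Finally, the uniqueness of $\omega$ up to a scalar is precisely the uniqueness clause of Lemma \ref{L:normalform}, and the last assertion follows by applying Theorem \ref{T:formalG} to the now-abelian, non-trivial group $G$: after a formal conjugation $G$ sits inside some $\mathbb{E}_{k,\lambda}$, which is a one-parameter group isomorphic to $(\mathbb{C},+)$, so $G$ is contained in a subgroup of $\Difffor_1$ isomorphic to $(\mathbb{C},+)$. I expect the main obstacle to be the key step above: transferring invariance from $H$ to $G$ requires combining the normality of $H$ with the uniqueness of the invariant form, so that the conjugate element $\tilde h$ forces $g^{*}\omega$ into the line $V$; the commutativity argument of the second paragraph, which guarantees that all non-identity elements of $H$ share the single line $V$, is what makes this transfer possible.
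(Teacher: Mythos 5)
Your proof is correct and follows essentially the same route as the paper: use normality of $H$ to show $g^*\omega$ is invariant under a nontrivial element of $H$, force $g^*\omega=\omega$ via the uniqueness in Lemma~\ref{L:normalform} and tangency to the identity, then conclude with Lemma~\ref{L:preservesV} and Theorem~\ref{T:formalG}. The only (harmless) difference is that where the paper cites the centralizer description of \cite[Proposition 1.3.2]{Frankpseudo} to see that all nontrivial elements of $H$ fix the same form $\omega$, you rederive this directly from the commuting-elements computation and the uniqueness clause of Lemma~\ref{L:normalform}, making the argument slightly more self-contained.
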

\begin{proof}
Let $f \in \Difffor_1$ be an element different from the identity. Let $V = \mathbb C \cdot \omega$ be the unique
one dimensional vector space of meromorphic $1$-forms preserved by $f$.
The centralizer of $f$ coincides with the subgroup of $\Difffor_1$ with elements satisfying
identity $h^* \omega = \omega$, see  \cite[Proposition 1.3.2]{Frankpseudo}.

Let now $h \in H$ be a nontrivial element of the abelian normal subgroup $H$ of $G$. Let $g \in G$ be
an arbitrary element. Since $H$ is normal, we have that $g \circ h = h' \circ g$ for some $h' \in H$ distinct from the identity. Let $\omega$ be a non-zero meromorphic $1$-form fixed by every element of $H$. Therefore
\[
    (g \circ h)^* \omega = (h' \circ g )^* \omega \implies h^* (g^* \omega) = g^* \omega \, .
\]
It follows that $g^* \omega$ is a  constant multiple of $\omega$ (as a matter of fact, since $g$ is tangent to $\id$, $g^* \omega = \omega$).
Thus $g$ is in the centralizer of $H$ as claimed.  

Being abelian, $G$ is isomorphic by Theorem \ref{T:formalG} to a subgroup of  $\mathbb E_{k,\lambda} \simeq \C$. This concludes the proof.
\end{proof}

\subsection{Representations at a neighborhood of a connected divisor}
Let $D = \sum_{i=1}^k  D_i \subset M$ be a compact connected simple normal crossing hypersurface with irreducible components $D_i$ on a smooth complex manifold $M$ of dimension $m$. Let $\rho: \pi_1(X,q) \to \Difffor_1$ be a representation where $X=M-D$. By the classical suspension process, one can construct a $m+1$ dimensional formal neighborhood $\hat X$ of $X$ carrying a smooth codimension $1$ (formal) foliation $\F$ having $X$ as a leaf and having $\rho$ as holonomy representation along $X$. If $U$ is an open subset of $X$, $\hat U$ will denote the restriction of $\hat X$ over $U$.

\begin{lemma}\label{L:abelian}
With the notations above, assume that $\rho(\gamma_i) \neq \id$ for every
$\gamma_i$ corresponding to small  loops around irreducible components of $D$. Then, there exists a neighborhood $U$ of $D$ such that the restriction of the representation  $\rho$ to $U-D$ has abelian image.
\end{lemma}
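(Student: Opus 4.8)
The plan is to produce a single formal meromorphic $1$-form $\omega$ (up to a scalar) preserved by the entire restricted image $G_U := \rho(\pi_1(U-D))$; once this is achieved, Lemma \ref{L:preservesV} immediately yields that $G_U$ is abelian, which is precisely the assertion. Writing $g_i := \rho(\gamma_i)$ for a meridian around $D_i$, the hypothesis guarantees $g_i \neq \id$, so Lemma \ref{L:normalform} attaches to each $g_i$ a unique one-dimensional space $V_i = \C\,\omega_i$ of invariant $1$-forms with $g_i^*\omega_i = \omega_i$. The whole argument is then organized around showing that all the $V_i$ coincide and that this common line is invariant under every loop in a small neighborhood of $D$.

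First I would treat the meridians. Recall (as used in Lemma \ref{L:normal}) that the centralizer of $g_i$ in $\Difffor_1$ is exactly $\{h : h^*\omega_i = \omega_i\}$. When two components meet, $D_i \cap D_j \neq \emptyset$, the meridians satisfy the local commutation relation $[\gamma_i,\gamma_j]=1$ coming from the standard model $(\C^*)^2$ of the complement near a normal crossing; hence $g_j$ lies in the centralizer of $g_i$, so $g_j^*\omega_i = \omega_i$, and the uniqueness in Lemma \ref{L:normalform} forces $V_i = V_j$. Since $D$ is connected, any two components are joined by a chain of pairwise intersecting components, so all the lines $V_i$ agree; call the common line $V = \C\,\omega$. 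In particular every meridian satisfies $g_i^*\omega = \omega$.

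The delicate point is to pass from the meridians to all of $\pi_1(U-D)$. Here I would shrink $U$ to a regular neighborhood of $D$ and analyze $\pi_1(U-D)$ by van Kampen over the SNC stratification: the complement retracts onto a space assembled from the pieces lying over the smooth strata $D_i^{\circ} = D_i \setminus \bigcup_{j\neq i} D_j$, each of which is a punctured normal-disk ($\C^*$-)bundle over $D_i^{\circ}$, glued along the $(\C^*)^2$-bundles over the crossings. Two inputs are needed. First, since the normal bundle of $D_i$ is a complex, hence oriented, line bundle, the meridian $\gamma_i$ is central in the fundamental group of the piece over $D_i$: the monodromy of an oriented circle bundle acts trivially on the class of the fiber. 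Consequently every loop $c$ coming from the base $D_i$ commutes with $g_i$, so $\rho(c)$ lies in the centralizer of $g_i$, which equals $\{h : h^*\omega = \omega\}$ because $V_i = V$. Second, van Kampen shows that these base loops, together with the meridians, generate $\pi_1(U-D)$.

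Granting the generation statement, every generator of $\pi_1(U-D)$ --- meridian or base loop --- maps under $\rho$ into the subgroup $\{h \in \Difffor_1 : h^*\omega = \omega\}$; hence so does all of $G_U$, which therefore preserves the line $V$, and Lemma \ref{L:preservesV} finishes the proof. I expect the main obstacle to be precisely the topological step of the previous paragraph: setting up van Kampen cleanly for the neighborhood $U-D$ of a connected SNC divisor so as to guarantee both the centrality of each $\gamma_i$ over its smooth stratum and that the meridians and base loops suffice to generate $\pi_1(U-D)$. The bookkeeping with invariant forms, by contrast, is entirely forced once the common line $V$ has been located and the generating set identified. Note that both hypotheses are genuinely used: $g_i \neq \id$ for all $i$ is what makes each $V_i$ well defined and lets base loops be detected through commutation, while connectedness of $D$ is what glues the $V_i$ into a single line.
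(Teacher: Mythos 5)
Your proposal is correct and follows essentially the same route as the paper: the same decomposition of $U-D$ into the pieces $U_i^{\circ}$ fibering over $D_i^{\circ}$, the same use of the $S^1$-bundle structure to make $\gamma_i$ normal/central so that the centralizer description (packaged in the paper as Lemma \ref{L:normal}) forces $\rho(\pi_1(U_i^{\circ}))$ to fix $\omega_i$, the same gluing $V_i=V_j$ at crossings via commuting meridians based near $D_i\cap D_j$ together with connectedness of $D$, and the same conclusion via Lemma \ref{L:preservesV}. The only cosmetic difference is in the globalization step: where you invoke van Kampen generation by meridians and base loops, the paper observes that the $\omega_i$ glue into a rank-one local system of closed meromorphic $1$-forms on $\widehat{U-D}$ --- which is exactly what legitimizes your implicit identification of the lines $V_i$ at a single transversal, since the rank-one transition constants make the transported line path-independent.
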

\begin{proof}
For each $i$,  let $U_i$ be a small tubular  neighborhood of
$D_i$ and set $\displaystyle{U_i^{\circ} = U_i - \cup_{j} D_j}$,  $\displaystyle{U= \cup_i  U_i} $. Note that $U_i^{\circ}$ has the homotopy type of a $S^1$-bundle over $\displaystyle{D_i^{\circ}:=D_i-\cup_{i\not=j} D_j}$
and therefore the subgroup generated by $\gamma_i$ in $\pi_1(U_i^{\circ})$ is normal.
 
By Lemma \ref{L:normal}, $\rho\big(\pi_1( {U_i}^{\circ})\big)$ preserves pointwise a unique one dimensional vector space $V_i$ of  meromorphic  $1$-forms in $(\mathbb C,0)$. Equivalently, the foliation $\F$ restricted to $\widehat{{U_i}^{\circ}}$ is defined by a \textit{closed} meromorphic formal one form $\omega_i$ with pole on ${U_i}^{\circ}$, unique up to multiplication in ${\C}^*$. Set $W_i= \C \omega_i$.
To analyze what happens  at a non-empty  intersection $D_i \cap D_j$, $i\not=j$,  we can assume that both $\gamma_i$ and $\gamma_j$ have base points
near $D_i\cap D_j$. Thus  $\gamma_i$ commutes with
$\gamma_j$. From Lemma \ref{L:normalform} and the assumptions on $\rho(\gamma_i)$, one deduces that $W_i=W_j$ on  ${U_i}^{\circ}\cap {U_j}^{\circ}$. This implies that the restriction of $\F$ to $\widehat {U-D}$ can be defined by a rank one local system of \textit{closed} meromorphic one forms. In other words, the holonomy group of $\F_{|\widehat {U-D}}$ evaluated with respect to some transversal $(T,q)\simeq\widehat{(\C,0)}$ ($q\in U-D$) preserves a one dimensional vector space of formal meromorphic one forms. Lemma \ref{L:preservesV} gives the sought conclusion.
\end{proof}

\begin{cor}\label{C:negative}
Notations and assumptions as in Lemma \ref{L:abelian}. Assume moreover that $M$ is a complex surface.  If $D_1, \ldots, D_k$ are the irreducible components of $D$, then the intersection matrix $(D_i \cdot D_j)$ is not negative definite.
\end{cor}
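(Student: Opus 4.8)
The plan is to distill the abelian holonomy around $D$ into a single additive $\C$-valued character and to play it against the homological relations that the intersection matrix forces on the meridians of $D$. First I would invoke Lemma~\ref{L:abelian} to find a neighborhood $U$ of $D$ such that $\rho$ restricted to $U-D$ has abelian image $G\subset\Difffor_1$. Since $D$ is a connected divisor, $U-D$ is connected, and by Theorem~\ref{T:formalG} the abelian group $G$ is conjugate into a one-parameter subgroup $\mathbb E_{k,\lambda}=\{\exp(t\,v_{k,\lambda});\ t\in\C\}\cong(\C,+)$. Composing $\rho|_{U-D}$ with this conjugation and with the isomorphism $\mathbb E_{k,\lambda}\xrightarrow{\sim}(\C,+)$ produces a homomorphism $\phi:\pi_1(U-D)\to(\C,+)$, which factors through $H_1(U-D;\mathbb Z)$ because the target is abelian. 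As $t\mapsto\exp(t\,v_{k,\lambda})$ is injective, the standing hypothesis $\rho(\gamma_i)\neq\id$ yields $t_i:=\phi(\gamma_i)\neq 0$ for every meridian $\gamma_i$.

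Next I would establish the key homological identity
\[
\sum_{j=1}^k (D_i\cdot D_j)\,[\gamma_j]=0 \quad\text{in } H_1(U-D;\mathbb Z)
\]
for each irreducible component $D_i$. To see this, take a small normal push-off $D_i'$ of $D_i$ inside $U$: it is a closed real surface meeting $D$ transversally in finitely many points, the signed number lying on $D_j$ being exactly $D_i\cdot D_j$ (including $D_i^2$ for $j=i$). Excising small disks around these intersection points turns $D_i'$ into a $2$-chain entirely contained in $U-D$ whose boundary is $\sum_j (D_i\cdot D_j)\gamma_j$; being a boundary, this $1$-cycle is zero in $H_1(U-D;\mathbb Z)$.

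Applying the character $\phi$ to this relation and writing $Q=(D_i\cdot D_j)$ for the (symmetric) intersection matrix and $t=(t_1,\dots,t_k)$, linearity gives $Qt=0$. Since every $t_i\neq 0$, the vector $t$ is a nonzero element of $\ker Q$, so $Q$ is singular. A negative definite symmetric matrix is nonsingular; hence $Q$ cannot be negative definite, which is the assertion of the corollary.

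The step I expect to require the most care is the homological identity: one has to choose the push-off $D_i'$ so that its intersections with \emph{all} components of $D$ are transverse and confined to small coordinate polydiscs, and then check that the boundary of the excised chain is precisely the integral combination $\sum_j(D_i\cdot D_j)\gamma_j$, with the correct signs and multiplicities. This is the surface version of the plumbing/linking-number computation underlying Mumford's description of the link of a divisor, and once it is in place the remainder of the argument is a formal consequence of Lemmas~\ref{L:abelian} and~\ref{L:normal}.
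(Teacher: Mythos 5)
Your proof is correct, and it takes a genuinely more self-contained route than the paper's. The paper argues by contradiction in two lines: it cites the classical fact (\cite[page 11]{Mumford}, see also \cite[Proposition 3.5]{croco5}) that negative definiteness of $(D_i\cdot D_j)$ forces the meridian classes $[\gamma_i]$ to be \emph{torsion} in $H_1(U-D,\mathbb Z)$, and then plays this against the observation that the restricted representation is abelian with values in the torsion-free group $\Difffor_1$, so $\rho(\gamma_i)\neq \id$ forces $[\gamma_i]$ to have infinite order. You instead re-derive the homological input underlying Mumford's torsion statement --- the plumbing relation $\sum_j (D_i\cdot D_j)[\gamma_j]=0$ in $H_1(U-D;\mathbb Z)$, via push-off chains --- and you linearize the abelian holonomy through Theorem \ref{T:formalG} into an additive character $\phi:\pi_1(U-D)\to(\C,+)$ with $\phi(\gamma_i)\neq 0$, concluding $Qt=0$ for a vector $t$ with no zero coordinate. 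Both arguments pivot on Lemma \ref{L:abelian}, but the mechanics differ: yours needs no external citation and actually proves strictly more, namely that under the hypotheses of Lemma \ref{L:abelian} the matrix $Q$ is \emph{singular} with a kernel vector all of whose entries are nonzero --- a conclusion in the spirit of what the paper extracts later in the proof of Theorem \ref{T:triviallinear} (effective divisors of zero self-intersection supported on the boundary). What the paper's version buys is brevity and economy of hypotheses: torsion-freeness of $\Difffor_1$ suffices, whereas your linearization requires conjugating the whole group $\rho(\pi_1(U-D))$ into $\mathbb E_{k,\lambda}$; note that your relation alone would also permit the paper's shortcut, since negative definiteness makes $Q$ invertible over $\mathbb Q$, hence each $[\gamma_i]$ torsion, and torsion-freeness finishes. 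The one step that genuinely demands the care you flag is the push-off identity: the signed count of zeros of a smooth section of the normal bundle of $D_i$ equals $D_i^2$, the intersections with $D_j$ near the crossings must be made transverse and land in $D_j^{\circ}$, and the small boundary circles are homologous in $U-D$ to $\pm\gamma_j$ precisely because $D_j^{\circ}$ is connected; all of this is standard, and you have identified the right checkpoints.
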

\begin{proof}
Aiming at a contradiction, assume  that the intersection matrix is negative definite. Let $U$ be a neighborhood of $D$ as in Lemma \ref{L:abelian}. Assume also that $U$ has the same homotopy type as $D$. On the one hand, the class of  any of the  loops $\gamma_i$ in $H_1(U-D, \mathbb Z)$ is torsion, see \cite[page 11]{Mumford} or \cite[Proposition 3.5]{croco5}.  On the other hand, since the representation is abelian by Lemma \ref{L:abelian} and with values in the torsion free group $\Difffor_1$, the assumption $\rho(\gamma_i) \neq \id$ implies that the class of $\gamma_i$ in $H_1(U-D, \mathbb Z)$, the abelianization of $\pi_1(U-D)$, is of infinite order. This gives the sought contradiction and establishes the corollary.
\end{proof}

\begin{cor}\label {C:generalnegative}
Notations and assumptions as in Lemma \ref{L:abelian}. Assume moreover that $M$ is a quasiprojective manifold of dimension $m\geq 2$ and $\overline{M}\subset {\mathbb P}^N$ a smooth compactification. Let $H\subset M$ be a hyperplane section.  If $D_1, \ldots, D_k$ are the irreducible components of $D$, then the intersection matrix $(D_i \cdot D_j\cdot H^{m-2})$ is not negative definite.
\end{cor}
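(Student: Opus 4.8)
The plan is to reduce everything to the surface case already established in Corollary~\ref{C:negative} by slicing $\overline{M}$ with a generic linear system until it becomes a surface. When $m=2$ there is nothing to prove, since $H^{m-2}=H^0$ and the assertion is literally Corollary~\ref{C:negative}; so I would assume $m\geq 3$ throughout.

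First I would use Bertini's theorems to pick $m-2$ sufficiently general hyperplanes $H_1,\dots,H_{m-2}\subset\mathbb{P}^N$ and set $\overline{S}=\overline{M}\cap H_1\cap\cdots\cap H_{m-2}$, a smooth projective surface, with quasi-projective open part $S=M\cap H_1\cap\cdots\cap H_{m-2}$. Applying the Bertini smoothness and irreducibility theorems iteratively, and noting that at every intermediate stage the variety being cut has dimension at least two (only the last cut drops from dimension two to one), I would arrange simultaneously that: (i) $\overline{S}$ is smooth of dimension two; (ii) each $C_i:=D_i\cap S$ is a smooth irreducible curve (a general section of the irreducible $(m-1)$-dimensional variety $D_i$); (iii) $D\cap S=\sum_i C_i$ is a simple normal crossing divisor, its nodes being the finitely many points $D_i\cap D_j\cap S$; and (iv) $D\cap S$ is connected. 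Point~(iv) follows from the connectedness theorem for general hyperplane sections applied to the connected variety $D$ of dimension $m-1\geq 2$: each successive section is taken from a variety of dimension at least two, so connectedness persists down to dimension one.

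Next I would transport the representation along the inclusion $\iota:S^{\circ}:=S-(D\cap S)\hookrightarrow X$, considering the composed representation $\rho\circ\iota_*:\pi_1(S^{\circ})\to\Difffor_1$. The key geometric observation is that a meridian $\gamma_i^S$ of $C_i$ inside $S^{\circ}$ is carried by $\iota_*$ to a conjugate of the meridian $\gamma_i$ of $D_i$ in $X$: a small disk transverse to $C_i$ within the surface $S$ is, by transversality of the generic $S$ to $D_i$ inside $M$, also a disk transverse to $D_i$ in $M$. Since conjugation in $\Difffor_1$ sends non-identity elements to non-identity elements, the hypothesis $\rho(\gamma_i)\neq\id$ yields $(\rho\circ\iota_*)(\gamma_i^S)\neq\id$. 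Thus the triple $(S,\,D\cap S,\,\rho\circ\iota_*)$ meets the hypotheses of Lemma~\ref{L:abelian}, and Corollary~\ref{C:negative} applied on $\overline{S}$ shows that the intersection matrix $(C_i\cdot C_j)_{\overline{S}}$ is not negative definite. Note that this step requires no Lefschetz-type control on $\pi_1$: only the local matching of meridians is needed, which is purely a transversality statement.

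Finally, I would invoke the projection formula to identify these numbers with those in the statement: because $\overline{S}$ is the transverse complete intersection of $\overline{M}$ with $m-2$ hyperplanes, one has $(D_i\cdot D_j\cdot H^{m-2})_{\overline{M}}=(C_i\cdot C_j)_{\overline{S}}$ for all $i,j$, so the non-negative-definiteness of the latter matrix gives the claim. The main obstacle is the second step: securing, with a single generic choice of linear sections, that the Bertini and connectedness requirements (i)--(iv) all hold at once, and verifying carefully that meridians are matched correctly under $\iota_*$ so that the nontriviality hypothesis survives the restriction to the slice.
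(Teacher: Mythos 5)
Your proposal is correct, and it follows the same overall reduction as the paper --- slice $\overline{M}$ by $m-2$ generic hyperplane sections down to a surface and invoke Corollary~\ref{C:negative} --- but it substitutes a genuinely different justification for the key step. The paper's proof is a two-line appeal to the Lefschetz-type theorem of Hamm--L\^e \cite{HammLe}: for $\dim X\geq 3$ the general hyperplane section of $X=M-D$ has fundamental group isomorphic to that of $X$, so the representation transports to the slice wholesale and Corollary~\ref{C:negative} applies. You observe, correctly, that no such $\pi_1$-isomorphism is needed: the mere homomorphism $\iota_*:\pi_1(S^{\circ})\to\pi_1(X)$ suffices, because the hypotheses of Lemma~\ref{L:abelian} only involve meridians, and a disk in $S$ transverse to $C_i=D_i\cap S$ is (by genericity of the slice) transverse to $D_i$ in $M$, so meridians of the sliced divisor map to conjugates of meridians of $D$ and the condition $\rho(\gamma_i)\neq\id$ survives restriction. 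Your transversality computation ($T_p\Delta+T_pD_i\supset T_pS+T_pD_i=T_pM$) is sound, and your Bertini bookkeeping --- smoothness of $\overline{S}$, irreducibility of each $C_i$ (every cut taken in dimension $\geq 2$), persistence of the SNC condition, and connectedness of $D\cap S$ via the preservation of the dual graph --- fills in genericity details the paper leaves implicit; note in particular that irreducibility of the $C_i$ matters, since non-negative-definiteness of a refined intersection matrix would not automatically descend to the coarse matrix $(C_i\cdot C_j)$. The projection-formula identification $(D_i\cdot D_j\cdot H^{m-2})_{\overline{M}}=(C_i\cdot C_j)_{\overline{S}}$ is also correct and is the implicit final step in the paper. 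What your route buys is elementarity (Bertini and transversality in place of Hamm--L\^e); what the paper's route buys is brevity, and the stronger fact that the whole representation, not just its meridian data, descends isomorphically to the slice --- though, as you point out, that strength is not needed here.
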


\begin{proof}
The case $m=2$ has been already settled in Corollary \ref{C:negative}. If $\dim X \geq 3$ then, by \cite{HammLe},  the general hyperplane section of $X=M-D$ has the fundamental group isomorphic to the original one. This establishes the proof, thanks to Corollary \ref{C:negative}.
\end{proof}

\subsection{Factorization}

The proof of the factorization result for representations with trivial linear part is adapted from the proofs of \cite[Theorem 3.1 and Theorem A]{croco5}.

\begin{thm}\label{T:triviallinear}
Let $X$ be a quasi-projective manifold of dimension $m\geq 2$ and $\rho : \pi_1(X) \to \Difffor$ a representation. Suppose that $\rho$ is not virtually abelian and has finite linear part, then the conclusion of Theorem \ref{T:infinitelinear} holds true.
\end{thm}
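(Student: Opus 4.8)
The plan is to reduce to the factorization machinery already developed for the infinite-linear case, but the obstruction is that now the linear part $\rho_1$ has finite image, so we cannot directly invoke the Arapura--Bartolo type result on $H^1(X,\rho_1^{\otimes(1-m)})$. First I would pass to a finite \'etale Galois covering $\pi\colon Y\to X$ on which the linear part of $\rho$ becomes trivial; this is possible because $\rho_1$ has finite image, so its kernel is a finite-index (normal, after enlarging) subgroup of $\pi_1(X)$. On $Y$ we thus obtain a representation $\rho\colon\pi_1(Y)\to\Difffor_1$ with values in the diffeomorphisms tangent to the identity. Since virtual abelianity is preserved and destroyed under finite covers in the appropriate sense, the hypothesis that $\rho$ is not virtually abelian survives, so the image $\rho(\pi_1(Y))$ is a non-abelian subgroup of $\Difffor_1$.

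Next I would exploit the dichotomy suggested in the strategy paragraph: either we can complete $Y$ to the compact situation after a ramified covering, in which case Theorem D of \cite{leaves} applies directly, or we genuinely use the quasi-projective geometry near infinity. For the latter, the key geometric input is the analysis of the representation in a neighborhood of the boundary divisor. I would choose a smooth compactification $\overline{Y}\subset\mathbb P^N$ with boundary $D=\overline{Y}-Y$ a simple normal crossing divisor, and examine the loops $\gamma_i$ around the irreducible components $D_i$. The crucial case is when $\rho(\gamma_i)\neq\id$ for all relevant components of some connected piece of $D$: Lemma \ref{L:abelian} then forces the representation to be abelian in a neighborhood of that connected divisor, and Corollary \ref{C:generalnegative} tells us that the intersection form $(D_i\cdot D_j\cdot H^{m-2})$ on such a connected piece cannot be negative definite.

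The heart of the argument, and the step I expect to be the main obstacle, is converting this non-negative-definiteness of the boundary intersection form into an actual fibration $f\colon Y\to C$. This is exactly where one invokes the result of \cite{Totaro} (see also \cite{jvpJAG}): a boundary divisor whose intersection form fails to be negative definite on some connected component produces a pencil, i.e.\ a morphism to a curve with connected fibers, along which the representation near infinity is constant. I would argue that the holonomy $\rho$ is trivial (or the relevant $1$-form is preserved) along the fibers of this morphism, so that $\rho$ descends through $f_*\colon\pi_1(Y)\to\pi_1^{orb}(C)$. The delicate bookkeeping is to treat simultaneously the components $D_i$ with $\rho(\gamma_i)=\id$, which are transverse to the foliation and do not obstruct the extension, and to ensure that the centralizer structure from Lemma \ref{L:normal} glues the local meromorphic $1$-forms into a global rank one local system compatible with the fibration. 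Once the fibration and the descent of $\rho$ to $\pi_1^{orb}(C)$ are established, setting $\psi$ to be the induced representation and recalling that $Y\to X$ is the chosen finite \'etale Galois cover yields precisely the commutative diagram in the conclusion of Theorem \ref{T:infinitelinear}.
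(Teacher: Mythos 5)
Your reduction steps (trivializing the linear part on a finite \'etale cover, splitting off the compact case via \cite{leaves} when $\rho$ extends to $\pi_1(\overline{X})$, choosing the minimal divisor $E$ around whose components the holonomy is nontrivial, and feeding Lemma \ref{L:abelian} and Corollary \ref{C:generalnegative} into the analysis at infinity) do match the paper. But the pivotal step is wrong as you state it: Totaro's theorem does \emph{not} convert a single connected boundary component with non-negative-definite intersection form into a pencil. What \cite[Theorem 2.1]{Totaro} (and \cite[Theorem 2]{jvpJAG}) requires is \emph{at least three} pairwise disjoint nonzero effective divisors with proportional Chern classes; one, or even two, do not suffice --- for instance, on the ruled surface $\mathbb{P}(\mathcal{O}_E \oplus L)$ over an elliptic curve $E$ with $L$ of degree zero and non-torsion, the two disjoint sections have self-intersection zero and proportional classes, yet are not fibers of any fibration. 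So from ``the intersection matrix of one connected component of $E$ is not negative definite'' you cannot conclude the existence of $f\colon \overline{X} \to C$.

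The missing idea, which is the heart of the paper's proof, is a group-theoretic covering trick that manufactures three boundary components before invoking Totaro. One observes that $G=\rho(\pi_1(\overline{X}-E))$ is residually finite (it is the inverse limit of its jet truncations, which are linear groups), and that a non-abelian subgroup of $\Difffor_1$ is non-solvable, so $G^{(2)}\neq 1$. Picking $1\neq a\in G^{(2)}$ and a finite quotient $q\colon G\to F$ with $q(a)\neq 1$, the abelian subgroup $S=\rho(\pi_1(U-E))$ attached to a boundary neighborhood $U$ satisfies $[q(G):q(S)]\geq 3$, since index at most $2$ would force $F$ to be metabelian, contradicting $1\neq q(a)\in F^{(2)}$. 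The ramified covering of $\overline{X}$ (\'etale over $X$) determined by $q\circ\rho$, after resolution of singularities, therefore has at least three connected boundary components; only then does the Hodge index theorem (semi-negative definiteness of each component, effective divisors of zero self-intersection, proportional Chern classes) put you within the scope of Totaro's theorem. Your final descent step also needs sharpening: the paper concludes $\rho(\pi_1(F))=\id$ for a fiber $F$ near the boundary not because the holonomy is ``constant along fibers,'' but because $\rho(\pi_1(F))$ is abelian by Lemma \ref{L:abelian} and normal in the non-abelian image of $\rho$, so Lemma \ref{L:normal} forces it to be trivial.
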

\begin{proof}
Up to passing to an \'etale finite cover, one can firstly assume that $\rho$ has trivial linear part. Let $\overline{X}$ be a projective manifold compactifying $X$ such that $\overline{X} - X$ is a simple normal crossing divisor. If the representation $\rho$ can be extended to a representation of $\pi_1(\overline{X})$ to $\Difffor$, then the result follows from \cite[Theorem D]{leaves}. If instead the representation does not extend to $\pi_1(\overline{X})$, then  let $E$ be the minimal divisor contained in $\overline{X}- X$ for which the representation extends to $\pi_1(\overline{X}-E)$. In particular, $\rho(\gamma)\neq \id$ for any small loop winding around a component of $E$.
	
Let $D = \sum D_i$ be a connected component of $E$. According to Lemma \ref{L:abelian} the restriction of the representation to a neighborhood $U$ of $D$ is abelian. Moreover, by Corollary \ref{C:generalnegative}, the intersection matrix $(D_i,D_j):=(D_i \cdot D_j\cdot H^{m-2})$ is not negative definite. 
	
Notice that a finitely generated subgroup $G$ of $\Difffor$ is residually finite. Indeed, $G$ is obtained as the inverse limit of $G^m$, its truncation up to order $m$, which is clearly linear. If moreover, $G \subset \Difffor_1$, non abelianity of $G$ is equivalent to non solvability \cite[Remark 5.9]{leaves}. In particular, the subgroups appearing in the derived sequence $(G^{(n)})_{n\ge 0}$ of $G$ are not trivial for any $n\ge0$. Coming back to our setting, take $G= \rho(\pi_1(\overline{X}-E))$ and $S\subset G$ the subgroup defined as $S=\rho(\pi_1(U-E))$, where $U$ is a neighborhood of $E$ such that $S$ is abelian (whose existence is guaranteed by Lemma \ref{L:abelian}). Let $a\in G^{(2)}$ be a non trivial element in the second derived group of $G$. We can produce a surjective morphism  $q : G \to F$ to a  finite group $F$ such that $q(a)\neq 1$.  Note that $q(S)$ has index at least three in $q(G)$: if the index is two, the group $F$ has to be metabelian ($F^{(2)}=1$),
 but $1\neq q(a)\in F^{(2)}$. After taking a resolution of singularities of the ramified covering of $\overline X$ (\'etale over $X$) determined by $q \circ \rho : \pi_1(\overline{X}-E) \to F$, we end up with a situation similar to the initial one with the advantage that now, the boundary divisor has at least three distinct connected components. We keep the original notation. 
	
Hodge index Theorem implies that the intersection matrix of each of the components of $E$  is semi-negative definite, and in particular, each one of them is the support of  an effective divisor with self-intersection zero. Hodge index Theorem also implies that all these divisors with zero self-intersection have proportional Chern classes. We are in position to apply \cite[Theorem 2.1]{Totaro} (see also \cite[Theorem 2]{jvpJAG}) in order to produce a fibration $f: \overline{X} \to C$ over a curve $C$ with connected fibers which contracts the boundary divisor to points. 
	
Let $F$ be a fiber of $f$ contained in a sufficiently small neighborhood of one of the connected components of the boundary. It follows from Lemma \ref{L:abelian} that $\rho(\pi_1(F))$ is abelian. Since we are assuming that $\rho$ is not abelian, Lemma \ref{L:normal} implies that $\rho(\pi_1(F))= \id$. This proves the result. 
\end{proof}

\section{Proof of Theorem \ref{THM:A}}
Assume  that the image of $\rho : \pi_1(X) \to \Difffor$ is not virtually abelian and that, after a Galois \'etale covering $\pi:Y \to X$,  we have the factorization of $\rho$ through a morphism $f:Y \to C$ to an orbicurve $C$, as in the conclusion of Theorems \ref{T:infinitelinear} and \ref{T:triviallinear}.

If $F$ denotes a  general fiber of $f$, ${\pi}^* \rho$ is trivial in restriction to $F$ and also to $\alpha (F)$ for any deck transformation $\alpha\in \mathrm{Gal}(\pi)$. On the other hand, ${\pi}^* \rho$ has infinite image. This implies that the group of deck transformations of $\pi$  preserves the fibration, otherwise there would exist $\alpha \in \mathrm{Gal}(\pi)$ such that $f$ maps $F_\alpha:=\alpha (F)$ onto a dense open Zariski subset of $C$. This implies that the index of $f_* (\pi_1 (F_\alpha))$ is at most finite in $\pi_1(C)$: a contradiction.   We can descend the fibration to a fibration 
$g: X \to C'$, where the orbicurve $C'$ is a finite quotient of $C$ under the natural action of the group of deck transformations of $\pi$.

By construction, the restriction of $\rho$ to the fundamental group of  fibers of $g$ have finite image $A$ in $\Difffor$. In particular, it is conjugated to a finite group of rotations. Moreover, it follows from the homotopy sequence of  fibrations that $A$ is a finite normal subgroup of $\Gamma = \rho(\pi_1(X))$. Since linear part is preserved by conjugation it implies that $A$ is in the center of $\Gamma$. Therefore the composition of $\rho$ with the natural quotient morphism $\Gamma \to \Gamma/ Z(\Gamma)$ factors through $g$. \qed

\bibliographystyle{amsplain}

\end{document}